\newenvironment{proof*}{\vskip 2mm\noindent {}}{\hfill $\Box$ \vskip 2mm}
\renewcommand{\Bbb}{\mathbb}
\newcommand{\B}{{\Bbb  B}}
\newcommand{\C}{{\Bbb  C}}
\newcommand{\D}{{\Bbb D}}
\newcommand{\Cn}{{\Bbb  C\sp n}}
\newcommand{\N}{{\Bbb  N}}
\newcommand{\F}{{\mathcal F}}
\newcommand{\I}{{\mathcal I}}
\newcommand{\J}{{\mathcal{J}}}
\newcommand{\m}{{\mathfrak m}}
\newcommand{\PSH}{{\operatorname{PSH}}}
\renewcommand{\dim}{{\operatorname{dim}}}
\newcommand{\eps}{\varepsilon}
\newcommand{\Om}{\Omega}
\newtheorem{theorem}{Theorem}[section]
\newtheorem{lemma}[theorem]{Lemma}
\newtheorem{exam}[theorem]{Example}
\newtheorem{prop}[theorem]{Proposition}
\newtheorem{defn}[theorem]{Definition}
\begin{document}

\title[Powers of ideals and convergence of Green functions]{Powers of ideals and convergence of Green functions with colliding poles}

\author{ Alexander Rashkovskii and Pascal J. Thomas}

\address{Alexander Rashkovskii\\
Faculty of Science and Technology,
University of Stavanger\\
N-4036 Stavanger, Norway}
\email{alexander.rashkovskii@uis.no}

\address{Pascal J. Thomas\\
Universit\'e de Toulouse\\ UPS, INSA, UT1, UTM \\
Institut de Math\'e\-matiques de Toulouse\\
F-31062 Toulouse, France} \email{pascal.thomas@math.univ-toulouse.fr}

\begin{abstract}
Let $\Omega$ be a bounded
hyperconvex domain in $\mathbb C^n$,
and $\I_\varepsilon$ be a family of ideals of holomorphic functions on $\Om$ vanishing at $N$ distinct points
all tending to $a\in\Om$ as
$\varepsilon\to0$.
As is known, convergence of the ideals $\I_\varepsilon$ to an ideal $\I$ does not guarantee the convergence of the pluricomplex Green functions
$G_{\I_\eps}$ to $G_\I$; moreover, the existence of the limit of the Green functions was unclear.
Assuming that all the powers $\I_\varepsilon^p$ converge to some ideals $\mathcal I_{(p)}$,
we prove that the functions $G_{\I_\eps}$
converge, locally uniformly away from $a$, to a function which is essentially the upper envelope of the scaled Green functions $p^{-1} G_{\I_{(p)}}$, $p\in\N$. As examples, we consider ideals generated by hyperplane sections of a holomorphic curve in $\C^{n+1}$ near a singular point. In particular, our result explains the asymptotics for $3$-point models from \cite{MRST}.
\end{abstract}

\keywords{pluricomplex Green function, ideals of holomorphic functions, Hilbert-Samuel multiplicity, flat families. \\ 2010 {\it Mathematics Subject Classification} 32U35, 32A27}

\thanks{Part of this work was carried out during the stay of
the second-named author in the University of Stavanger in October 2009 and
 during the stay of the first-named
author in the Universit\'e Paul Sabatier in October 2011.}

\maketitle

\section{Introduction}

Pluricomplex Green functions are
fundamental solutions of the (complex) Monge-Amp\`ere operator with zero boundary values \cite{Lem}.
Since the operator is non-linear, superposition does not work and it makes sense
to consider pluricomplex Green functions with multiple poles \cite{Lel}.
We will always do this in the framework of a bounded hyperconvex domain
$\Omega \subset \C^n$.

It is well known that a multipole Green function depends continuously on its poles, provided they do not collide. The convergence problem for the Green
functions with simple logarithmic poles at finitely many points as the
poles tend to the origin was considered in \cite{MRST}.
Let $S_\eps:=\{ a_1(\eps), \dots,  a_N(\eps)\} \subset \Omega$ be
our pole set.  Assume that $\lim_{\eps\to0} a_j(\eps) = a \in \Omega$ for all $j$.
The key to the analysis in \cite{MRST} was to consider
$\I_\eps := \{ f \in \mathcal O (\Omega): f(a_j(\eps))=0, 1\le j \le N\}$,
the radical ideal associated to $S_\eps$, and its limit, taken
in an appropriate sense (see Section \ref{basic} for a precise definition).
There it was proved that, if the respective limits existed,
with the limits of Green functions taken in $L^1_{\rm loc}(\Omega)$, then
$$
\lim_{\eps\to0}G_{\I_\eps}\ge G_{\lim\I_\eps}.
$$
Furthermore, we have $\lim_{\eps\to0}G_{\I_\eps}= G_{\lim\I_\eps}$
if and only if the ideal $\lim\I_\eps$ is a
complete intersection, that is to say, admits exactly $n$ generators
(and in that case convergence of the Green functions always does occur,
and is uniform on compacta of $\Omega \setminus \{a\}$).
A detailed study of the example of families of three points showed
that in cases where the limit ideal is not a complete intersection,
the limits of the Green functions may exist, even though they are far
from being the Green function of the limit ideal.

\medskip

The complete intersection condition is equivalent to the fact that
the codimension (or ``length") of the ideal equals its Hilbert-Samuel
multiplicity, which is defined asymptotically from the lengths of the powers of the ideal.
The main idea of the present paper is to take all powers of $\I_\eps$
before passing to the limit as $\eps\to 0$, and use the infinite
family of those limits to determine the limit of the Green functions.

Our main result is the following:
\begin{theorem}
\label{main}
Let $\{\I_\eps\}_{\eps\in A}$ be a family of ideals of holomorphic functions vanishing at distinct points $a_1(\eps),\ldots, a_N(\eps)$ of a bounded hyperconvex domain $\Om\subset\Cn$, where $A$ is a set in the complex plane, $0\in\overline A\setminus A$. Assume that all $a_j\to a\in\Om$ and $\I^p_\eps\to\I_{(p)}$ for all $p\in\N$ as $\eps\to 0$ along $A$.Then the limit of the Green functions $G_{\I_\eps}$ exists and equals essentially the upper envelope of the scaled Green functions of the limit ideals:
$$\lim_{\eps\to0}G_{\I_\eps}(z)=\limsup_{y\to z}\sup_{p\in\N}\, p^{-1} G_{\I_{(p)}}(y).$$
\end{theorem}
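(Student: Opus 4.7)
The plan is to exploit the elementary rescaling $G_{\J^p}=p\,G_\J$, valid for any ideal $\J$ of holomorphic functions (since a set of generators of $\J^p$ is given by $p$-fold products of generators of $\J$, so $\log|\J^p|=p\log|\J|$ locally, and the Perron envelope defining the Green function rescales by $p$). This reduces the asymptotics of $G_{\I_\eps}$ to that of $G_{\I_\eps^p}$ for arbitrarily large $p$. For the lower bound I would apply the inequality from \cite{MRST}, $\liminf_\eps G_{\I_\eps^p}\ge G_{\I_{(p)}}$, to each convergent family $\{\I_\eps^p\}_\eps$ and divide by $p$, obtaining $\liminf_\eps G_{\I_\eps}(z)\ge p^{-1}G_{\I_{(p)}}(z)$; taking the supremum over $p\in\N$ then gives $\liminf_\eps G_{\I_\eps}(z)\ge v(z):=\sup_p p^{-1}G_{\I_{(p)}}(z)$. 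By $\Lloc(\Om)$-precompactness of locally uniformly bounded negative plurisubharmonic functions, any subsequential limit $u$ of $G_{\I_\eps}$ is psh; upper semi-continuity of $u$ then upgrades the pointwise inequality to $u(z)\ge v^*(z)=\limsup_{y\to z}v(y)$.

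A useful auxiliary monotonicity comes from the subadditive structure of the graded family. Passing to the limit in the ideal identity $\I_\eps^p\cdot\I_\eps^q=\I_\eps^{p+q}$ yields $\I_{(p)}\cdot\I_{(q)}\subset\I_{(p+q)}$, hence, by the product-ideal formula $G_{\I\cdot\J}=G_\I+G_\J$, the superadditivity $G_{\I_{(p)}}+G_{\I_{(q)}}\le G_{\I_{(p+q)}}$. Fekete's lemma applied to the subadditive sequence $\{-G_{\I_{(p)}}(z)\}_p$ then shows that the supremum $v$ is in fact a pointwise monotone nondecreasing limit, $v(z)=\lim_{p\to\infty}p^{-1}G_{\I_{(p)}}(z)$, which is a much more tractable object.

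The reverse inequality $\limsup_\eps G_{\I_\eps}(z)\le v^*(z)$ is the crux, and where I expect the main difficulty. My plan combines two ingredients. First, the Siciak--Zahariuta-type algebraic representation
$$G_\J(z)=\Bigl(\sup_{p\in\N}\,\sup\{p^{-1}\log|\phi(z)|:\phi\in\J^p,\ \|\phi\|_\Om\le1\}\Bigr)^*,$$
valid for ideals with zero-dimensional support in hyperconvex domains, together with a Montel/normal-families argument extracting from bounded sequences $\phi_\eps\in\I_\eps^p$ locally uniform subsequential limits $\phi_0\in\I_{(p)}$ with $\|\phi_0\|_\Om\le1$, yields the per-level upper bound $\limsup_\eps\sup\{p^{-1}\log|\phi|:\phi\in\I_\eps^p,\ \|\phi\|_\Om\le1\}\le p^{-1}G_{\I_{(p)}}\le v^*$. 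The main obstacle is that these per-level bounds do not immediately combine to an estimate on $\limsup_\eps G_{\I_\eps}$, because $\limsup_\eps$ does not commute with the outer $\sup_p$. To bridge this gap I expect to invoke a Monge--Amp\`ere / uniqueness argument: both $u$ and $v^*$ are negative psh functions vanishing on $\partial\Om$ whose Monge--Amp\`ere measures concentrate at $a$ with total mass $(2\pi)^nN$ (by flatness of the family, the asymptotic Hilbert--Samuel multiplicity of the graded family $\{\I_{(p)}\}$ coincides with the common multiplicity $N$ of each $\I_\eps$), and they share the same Lelong-type asymptotic at $a$ prescribed by the $\I_{(p)}$; combined with the lower bound $u\ge v^*$, a Bedford--Taylor-type domination principle then forces $u=v^*$. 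This identifies every subsequential $\Lloc$-limit with $v^*$, so the full family converges, and local uniform convergence on compacta of $\Om\setminus\{a\}$ follows from uniform boundedness and Hartogs' lemma.
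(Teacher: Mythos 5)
Your overall architecture is essentially the paper's: the lower bound by applying the MRST inequality to each power $\I_\eps^p$ and using $G_{\I_\eps^p}=pG_{\I_\eps}$, Fekete/superadditivity to identify $v=\sup_p p^{-1}G_{\I_{(p)}}$ as a monotone limit, and identification of every subsequential limit $u$ with $v^*$ by comparing Monge--Amp\`ere masses at $a$ and invoking a domination principle. The Siciak--Zahariuta detour is dead weight (as you note, it cannot close the gap because $\limsup_\eps$ and $\sup_p$ do not commute); the paper goes straight to the mass argument.

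The genuine gap is the step where you assert that a subsequential $L^1_{\rm loc}$ limit $u$ of $G_{\I_\eps}$ satisfies $(dd^cu)^n=N\delta_a$ and that local uniform convergence ``follows from uniform boundedness and Hartogs' lemma.'' Hartogs' lemma gives only a one-sided bound on compacta; it yields no equicontinuity, does not show that $u$ is maximal on $\Om\setminus\{a\}$, and does not control the total Monge--Amp\`ere mass (weak $L^1_{\rm loc}$ convergence of plurisubharmonic functions does not imply convergence of their Monge--Amp\`ere measures, and a priori the mass at $a$ could drop below $N$). The paper devotes all of Section~\ref{conv} to exactly this point: Theorem~\ref{bootstrap} upgrades $L^1_{\rm loc}$ convergence to uniform convergence on compacta of $\Om\setminus\{a\}$ via the equicontinuity estimate of Lemma~\ref{equicont}, whose proof constructs, for nearby points $z_1,z_2$ away from the poles, a holomorphic self-map of a slightly enlarged ball fixing the pole set and carrying $z_1$ to $z_2$, so that a modification of $G_{S_\eps}\circ\Phi$ is a competitor for $G_{S_\eps}$; maximality and a truncation argument then give $(dd^cu)^n=N\delta_a$ exactly. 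Without this your domination step cannot be run, since you do not know that $u$ and $v^*$ carry the same mass. A secondary soft spot: the identity ``mass of $v^*$ equals $N$'' is not just flatness; it needs $\ell(\I_{(p)})=\lim_\eps\ell(\I_\eps^p)=\binom{p+n-1}{n}N$, Theorem~1.7 of \cite{Mustata} identifying the volume $e(\I_\bullet)=\lim_p p^{-n}e(\I_{(p)})$ of the graded family with $\limsup_p n!\,p^{-n}\ell(\I_{(p)})$, and monotone convergence of $(dd^c\,(p!)^{-1}G_{\I_{(p!)}})^n$ to transfer that volume to the mass of $v^*$.
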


Observe that we have used the subset $A$ in order to allow convergence along any
partial set of parameters (subsequences for instance).

An  ingredient in our proof which should be of interest in itself
is Theorem \ref{bootstrap}, which proves that for families of pluricomplex
Green functions with a fixed number of poles, all reasonable notions of convergence
coincide (the weak convergence in local integrability implies the strong one,
uniform on compacta).

We also provide some examples to show how the limits of Green functions for
three points investigated in \cite{MRST} and even \cite{DQHT} can be obtained much faster,
 and some of those results can be generalized to the higher-dimensional case. We also obtain results in the case of sections of holomorphic curves.

\medskip

{\bf Acknowledgements.} The second named author would like to thank Norman Levenberg
and Nguyen Quang Dieu for stimulating discussions on the topic of this article.

\section{Some basic notions}
\label{basic}

Let ${\mathcal O}(\Om)$ be the space of all holomorphic functions on a bounded hyperconvex domain  $\Om\subset\Cn$. Given an ideal $\I\subset{\mathcal O}(\Om)$, $V(\I)$ denotes its zero variety:
$$V(\I)=\{z\in\Om:\: f(z)=0,\ \forall f\in\I\}.$$

In what follows, we always assume $V(\I)$ to be a finite set. Recall that the \emph{length} of such an ideal is $\ell (\mathcal I)= \dim\, \mathcal O/\mathcal I<\infty$, and the \emph{Hilbert-Samuel multiplicity} is $$ e(\mathcal I)=\lim_{k\to\infty}\frac{n!}{k^n}\,\ell(\mathcal I^k)<\infty.$$
It is known that $e(\I)\ge \ell(\I)$, and the two values are equal if and only if $\I$ is a complete intersection ideal, which means that it has precisely $n$ generators {\cite[Ch. VIII, Theorem 23]{Za-Sa}}.

\medskip

Let $0\in {\overline A}\setminus A\subset \C$ and let
$(\I_\eps)_{\eps \in A}$ be a family of finite length ideals in ${\mathcal O}(\Omega)$. {\it Convergence} of such ideals we will understand in the topology of the Douady space \cite{Dou1}. In particular, it implies
\begin{equation}\label{eq:limitlength}
\ell(\lim_{\eps\to0}\I_\eps)=\lim_{\eps\to0}\ell(\I_\eps).
\end{equation}
As was shown in \cite{MRST}, this convergence is equivalent to the one given in the definition below.

\begin{defn}{\rm \cite{MRST}}
\label{convid}
\begin{enumerate}
\item[(i)]
$\liminf\limits_{A\ni\eps \to 0}\I_\eps$ is the ideal consisting of all $f\in {\mathcal O}(\Omega)$ such that $f_\eps\to f$ locally
uniformly on $\Omega$, as $\eps\to 0$, where $f_\eps \in \I_\eps$.\\
\item[(ii)]
$\limsup\limits_{A\ni\eps \to 0}\I_\eps$ is the ideal
of ${\mathcal O}(\Omega)$ generated by all functions $f$ such that
$f_j\to f$ locally uniformly, as $j\to \infty$, for some
sequence $\eps_j\to 0$ in $A$ and $f_j\in \I_{\eps_j}$.\\
\item[(iii)]
If the two limits are equal, we say that the family
$\I_\eps$ {\rm converges} and write
$\lim\limits_{A\ni\eps \to 0}\I_\eps$ for
the common value of the upper and lower limits.
\end{enumerate}
\end{defn}
If it is clear from
the context which set $A$ we are referring to, then we just drop it from the subscript.

\medskip

The main object of the note is the pluricomplex Green function for an ideal $\I$ of ${\mathcal O}(\Om)$, defined as follows.
\medskip
\begin{defn}{\rm\cite{RaSi}}
\label{greenideal}
For each $a\in \Omega$, let $(\psi_{a,i})_i$ be
a (local) system of generators of $\I$. Then the {\rm Green function} of $\I$ is
$G_{\I}(z) = \sup\{u(z):\: u\in\F_I\}$, where
\begin{equation}\label{eq:FI}\F_\I = \sup \big\{ u(z):\: u \in PSH_-(\Omega), \
u(z) \le \max_i \log |\psi_{a,i}| + O(1) , \forall a \in \Omega \big\}.\end{equation}
\end{defn}
It was proved in \cite{RaSi} that the function $G_\I$ belongs to the class $\F_\I$ and, moreover,
\begin{equation} \label{asympgreen}
G_\I(z)=\max_i \log |\psi_{a,i}| + O(1).
\end{equation}
In addition, it satisfies $(dd^cG_\I)^n=0$ on $\Om\setminus V(\I)$ and, if $V(I)\Subset\Om$, it equals $0$ on the boundary of $\Om$. Furthermore, it is the only plurisubharmonic function with these properties.
This implies, in particular, that for every power $\I^p$ of $\I$,
\begin{equation}\label{eq:grpower}
G_{\I^p}=p\,G_\I.
\end{equation}
Note also that, in our setting of finite length ideals on bounded pseudoconvex domain, one can always choose {\sl global} generators $\psi_i\in{\mathcal O}(\Om)$ and, when $V(\I)=\{a\}$, relation (\ref{asympgreen}) implies that the residual Monge-Amp\`ere mass of $G_\I$ at $a$ equals that of the function $\frac12\log\sum|\psi_i|^2$, so by \cite[Lemma~2.1]{D8},
\begin{equation}\label{eq:grfmass}
(dd^c G_\I)^n=e(\I)\delta_a.
\end{equation}

\medskip

A closely related (though technical) object is the {\it greenification} of a plurisubharmonic function near its singularity point.

\begin{defn}{\rm\cite{R7}}
\label{greenific} Given a function $\varphi\in PSH^-(\Om)$, its {\rm greenification} at a point $a\in\Om$ is the upper regularization $g_\varphi$ of the function $\sup\{u\in PSH^-(\Om):\: u\le \varphi+O(1)\ {\rm near\ } a\}$.
\end{defn}

The function $g_\varphi$ is maximal on $\Om\setminus\{\varphi=-\infty\}$. If $\varphi$ is locally bounded near the boundary of $\Om$, then $g_\varphi=0$ on $\partial \Om$.  Obviously, $\varphi\le g_\varphi$. Furthermore, $\varphi= g_\varphi+O(1)$ near $a$ if $\varphi$ is locally bounded and maximal on a punctured neighborhood of $a$, and in this case it coincides with the {\it Green function for the singularity} $\varphi$ introduced in \cite{Za0}, see also \cite{Za}. Note that the relation $(dd^c g_\varphi)^n(a)=(dd^c\varphi)^n(a)$ remains true without the maximality assumption on $\varphi$.

\section{Modes of convergence}
\label{conv}

\begin{theorem}
\label{bootstrap}
Let $S_\eps = \{ a_1^\eps, \dots, a_N^\eps \}$ with $\lim_{\eps\to0} a_j^\eps=a$,
$1 \le j \le N$. Suppose that $\lim_{\eps\to0} G_{S_\eps } = g$ in
$L^1_{loc}(\Omega \setminus \{a\})$.
Then the convergence takes place uniformly on compacta of
$\Omega \setminus \{a\}$, and
$(dd^c g)^n = N \delta_a$; in particular, $g$ is maximal plurisubharmonic
on $\Omega \setminus \{a\}$.
\end{theorem}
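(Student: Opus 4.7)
\medskip\noindent
\textbf{Proof plan.} The strategy is to establish uniform bounds on $\{G_{S_\eps}\}$, extend the $L^1_{loc}$ limit to $\Om$, identify the structure of the limit (maximality off $a$, total Monge-Amp\`ere mass $N$), and finally upgrade to uniform convergence. For the uniform bounds, $G_{S_\eps}\le 0$ trivially, while the function $u_\eps(z):=\sum_{j=1}^N\log(|z-a_j^\eps|/\mathrm{diam}\,\Om)$ belongs to $\PSH^-(\Om)$ and satisfies $u_\eps(z)=\log|z-a_j^\eps|+O(1)$ near each $a_j^\eps$, hence $u_\eps\in\F_{\I_\eps}$ and $G_{S_\eps}\ge u_\eps$. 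This yields local uniform bounds on $\Om\setminus\{a\}$ and a uniform $L^1_{loc}(\Om)$ bound; since $\{a\}$ is a Lebesgue nullset, the $L^1_{loc}(\Om\setminus\{a\})$ convergence extends to $L^1_{loc}(\Om)$, and the upper semicontinuous regularization $\tilde g\in\PSH^-(\Om)$ coincides with $g$ off $\{a\}$.

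Next I would show $(dd^c\tilde g)^n=N\delta_a$. On any compact $K\Subset\Om\setminus\{a\}$, eventually $S_\eps$ lies in a small neighborhood of $a$ disjoint from $K$, so $(dd^c G_{S_\eps})^n\equiv 0$ on a neighborhood of $K$. Uniform boundedness plus $L^1_{loc}$ convergence gives, along a subsequence, convergence in capacity, and Bedford-Taylor continuity yields $(dd^c\tilde g)^n=0$ on $\Om\setminus\{a\}$. So $\tilde g$ is maximal off $a$ and $(dd^c\tilde g)^n=c\,\delta_a$ for some $c\ge 0$. Since the Radon measures $(dd^cG_{S_\eps})^n=\sum_j\delta_{a_j^\eps}$ have total mass $N$ and supports shrinking to $\{a\}$, they converge weakly on $\Om$ to $N\delta_a$. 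Combining this with a continuity result for the Monge-Amp\`ere operator across the isolated singularity in the limit (in the spirit of \eqref{eq:grfmass} and \cite[Lemma 2.1]{D8}) identifies $c=N$.

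Finally, $\tilde g$, being the greenification of its singularity at $a$, is continuous on $\Om\setminus\{a\}$ by the standard theory of Green functions on hyperconvex domains. Hartogs' lemma then yields $\limsup_\eps G_{S_\eps}\le\tilde g$ uniformly on compacta of $\Om\setminus\{a\}$, and the matching $\liminf\ge\tilde g$ follows from the maximality of each $G_{S_\eps}$ outside its poles via a Perron-Bremermann envelope argument on a shell surrounding $K$, together with the continuity of $\tilde g$. The main obstacle is the mass identification $c=N$: mere $L^1_{loc}$ convergence does not imply weak convergence of Monge-Amp\`ere measures on all of $\Om$, and $\tilde g$'s isolated singularity at $a$ falls outside the direct scope of Bedford-Taylor, so one must carefully invoke the residual-mass machinery for isolated pluripolar singularities to close this step.
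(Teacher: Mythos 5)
Your outline has a genuine gap at its core: the passage from $L^1_{\rm loc}$ convergence to control of the Monge--Amp\`ere measure and to uniform convergence. The assertion that uniform boundedness together with $L^1_{\rm loc}$ convergence yields (subsequential) convergence in capacity, and hence $(dd^c\tilde g)^n=0$ off $a$ by Bedford--Taylor, is false in general: for instance $u_j=|z_1^j-z_2^j|^{2/j}$ on the bidisk are uniformly bounded plurisubharmonic functions, each maximal (their Monge--Amp\`ere measures vanish off the pluripolar set $\{z_1^j=z_2^j\}$ and cannot charge it), yet they converge in $L^1_{\rm loc}$ to $\max(|z_1|^2,|z_2|^2)$, whose Monge--Amp\`ere measure has positive mass on $\{|z_1|=|z_2|\}$ far from the origin; convergence in capacity fails there. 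For the same reason your final step is circular: Hartogs' lemma only gives the one-sided bound $\limsup_\eps G_{S_\eps}\le\tilde g+\delta$, and the Perron--Bremermann comparison on a shell around $K$ requires the lower bound $G_{S_\eps}\ge\tilde g-\delta$ on the boundary of the shell, which is exactly the estimate that $L^1_{\rm loc}$ convergence does not provide (a plurisubharmonic function may dip far below its $L^1$ limit on small sets). The identification $c=N$, which you correctly flag as the main obstacle, is likewise not resolved by weak convergence of $\sum_j\delta_{a_j^\eps}$; it genuinely needs the uniform (or capacity) convergence that is still missing at that stage.

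What closes all of these gaps in the paper is an equicontinuity estimate (Lemma~\ref{equicont}): if $S\subset B(0,\delta)$ with $\delta$ small, then $|G_S(z_1)-G_S(z_2)|<\eta$ for $z_1,z_2$ in a compact $K\subset\B^n\setminus\{0\}$ with $\|z_1-z_2\|\le\delta$, \emph{uniformly in $S$}. This is proved by an explicit holomorphic map $\Phi(z)=z+\frac{P(z)}{P(z_1)}(z_2-z_1)$, with $P$ vanishing on $S$, which fixes $S$, sends $z_1$ to $z_2$, and maps $\B^n$ into a slightly larger ball, combined with a modification of $G_S$ that makes it a negative competitor on that larger ball. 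Equicontinuity plus almost everywhere convergence of a subsequence then gives the uniform Cauchy property on compacta (Proposition~\ref{bootball}), the general domain is reduced to the ball by comparing $G^\Om_{S_\eps}$ with $G^{B(0,\delta_0)}_{S_\eps}$, maximality of $g$ off $a$ becomes immediate from uniform convergence, and the total mass $N$ is obtained by truncating at level $-m$: the truncations $\max(G_{S_\eps},-m)$ converge uniformly on all of $\Om$ (since both $g$ and $G_{S_\eps}$ are below $-m-1$ near $a$), so their Monge--Amp\`ere masses, all equal to $N$, pass to the limit. Without an equicontinuity statement of this kind, your scheme does not go through.
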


Note that we may assume $a=0$ without loss of generality.
We will use the well-known rough estimates of a multipole Green function:
$$
\min_{a\in S} G_a \ge G_S \ge \sum_{a\in S} G_a.
$$

The proof of the Theorem rests on the proof of the analogous
fact in the special case of the ball, Proposition \ref{bootball} below,
and will be given at the end of this section.

In what follows, $\|\cdot\|$ stands for the usual Euclidean norm, $a\cdot b=\sum a_jb_j$, $B(a,r)=\{ z\in \mathbb C^n : \|z-a\|<r\}$, and $\B^n=B(0,1)$.

\begin{lemma}
\label{equicont}
Let $K \subset \mathbb B^n \setminus \{0\}$ be a compact set. Then for any $\eta >0$,
there exists $\delta >0$ depending only on $\eta$, $N$ and $K$ such that if $z_1,
z_2 \in K$, $\| z_1-z_2 \| \le \delta$ and
$S\subset B(0,\delta)$, then $|G_S(z_1)-G_S(z_2)| < \eta$.
\end{lemma}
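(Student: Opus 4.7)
The proof is by contradiction and normal families. Suppose the conclusion fails: there exist $\eta>0$ and sequences $\delta_m\downarrow 0$, pole sets $S^{(m)}=\{a_1^{(m)},\ldots,a_N^{(m)}\}\subset B(0,\delta_m)$, and $z_1^{(m)},z_2^{(m)}\in K$ with $\|z_1^{(m)}-z_2^{(m)}\|\le\delta_m$ satisfying $|G_{S^{(m)}}(z_1^{(m)})-G_{S^{(m)}}(z_2^{(m)})|\ge\eta$. After passing to a subsequence, $z_1^{(m)},z_2^{(m)}\to z^*\in K$.

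The plan is then to use the rough sandwich $\sum_j G_{a_j^{(m)}}\le G_{S^{(m)}}\le\min_j G_{a_j^{(m)}}$ combined with the explicit one-pole Green function $G_a(z)=\log|\phi_a(z)|$ on $\B^n$ (with $\phi_a$ the ball automorphism sending $a$ to $0$). From the Möbius formula one reads off that $\log|\phi_{a_j^{(m)}}(z)|\to\log\|z\|$ locally uniformly on $\B^n\setminus\{0\}$ as $a_j^{(m)}\to 0$, so $\{G_{S^{(m)}}\}$ is uniformly bounded on every compact $K'\Subset\B^n\setminus\{0\}$, squeezed between $N\log\|z\|+o(1)$ and $0$. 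Extracting a further subsequence, $G_{S^{(m)}}\to g$ in $\Lloc(\B^n\setminus\{0\})$ for some bounded plurisubharmonic $g$. Since each $G_{S^{(m)}}$ is continuous and maximal on $\B^n\setminus B(0,\delta_m)$, and maximality is preserved under uniformly bounded $\Lloc$-limits (continuity of the Monge-Amp\`ere operator along such sequences), $g$ is maximal plurisubharmonic on $\B^n\setminus\{0\}$.

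The crucial step is to show $g$ is continuous on $\B^n\setminus\{0\}$. Extending $g$ across $\{0\}$ by plurisubharmonicity produces a negative PSH function on $\B^n$ whose unbounded locus is the pluripolar singleton $\{0\}$, which is maximal off $\{0\}$ and has zero boundary value on $\partial\B^n$; by uniqueness of the Bedford-Taylor MPSH solution of the Monge-Amp\`ere Dirichlet problem with continuous data, or equivalently by identifying $g$ as the greenification at $0$ of its own singularity in the sense of Definition \ref{greenific}, $g$ must be continuous on $\B^n\setminus\{0\}$. Once this is established, the $\Lloc$-convergence of the uniformly bounded PSH sequence $\{G_{S^{(m)}}\}$ to the continuous limit $g$ upgrades to locally uniform convergence on $\B^n\setminus\{0\}$ (Hartogs' lemma for the $\limsup$ direction together with the comparison principle for MPSH functions for the $\liminf$ direction). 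Applied at $z^*$, this yields $G_{S^{(m)}}(z_i^{(m)})\to g(z^*)$ for $i=1,2$, so the difference tends to $0$, contradicting the lower bound $\eta$. The principal obstacle is precisely the continuity of $g$ off the origin, requiring a careful regularity analysis of bounded MPSH functions with a single pluripolar singular point; the rest of the argument is routine compactness and pluripotential theory.
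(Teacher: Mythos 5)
Your proposal has a genuine gap, and moreover its overall architecture is circular with respect to the way this lemma is used in the paper. The lemma is a quantitative equicontinuity estimate whose whole purpose (in Proposition \ref{bootball}) is to upgrade $\Lloc$--convergence of the Green functions to locally uniform convergence. Your contradiction argument extracts an $\Lloc$--convergent subsequence and then asserts that this convergence can be upgraded to locally uniform convergence near $z^*$; but that upgrade is exactly what requires an equicontinuity input of the type you are trying to prove. Hartogs' lemma gives only the one-sided bound $\limsup_m G_{S^{(m)}}\le g$ uniformly on compacta; the opposite inequality can fail badly for $\Lloc$--convergent sequences of plurisubharmonic functions (think of $\frac1j\log|P_j|$ with zeros of $P_j$ passing through the region), and your appeal to ``the comparison principle for MPSH functions for the $\liminf$ direction'' is not an argument.

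Two intermediate claims are also unjustified. First, maximality is \emph{not} preserved under uniformly bounded $\Lloc$--limits: the complex Monge--Amp\`ere operator is not continuous along such sequences (Cegrell's classical examples); the paper only deduces maximality of the limit \emph{after} uniform convergence has been established. Second, the continuity of $g$ on $\B^n\setminus\{0\}$ --- which you correctly identify as the crux --- does not follow from identifying $g$ with a greenification or from uniqueness of solutions of a Monge--Amp\`ere Dirichlet problem: upper envelopes of this kind, and maximal plurisubharmonic functions with an isolated pluripolar singularity, need not be continuous in general, and no continuity of boundary/singularity data is available here. The paper avoids all of this by a direct construction: given $z_1,z_2\in K$ close together and $S$ near $0$, one builds (Lemma \ref{map}) a holomorphic map $\Phi:\B^n\to B(0,1+\eta_1)$ fixing $S$ with $\Phi(z_1)=z_2$, modifies $G_S$ near $\partial\B^n$ to obtain a negative plurisubharmonic competitor $v_1$ on $B(0,1+\eta_1)$ with the same singularities on $S$, and concludes $G_S(z_1)\ge v_1\circ\Phi(z_1)=v_1(z_2)\ge G_S(z_2)-\eta$. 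If you want to salvage a compactness argument you would still need, in effect, this kind of explicit perturbation estimate; as written, the proposal does not prove the lemma.
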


\begin{proof}
Since the roles of $z_1$ and $z_2$ are symmetric, it will be enough to show that
$$
G_S(z_1) \ge G_S(z_2)-\eta
$$
whenever $z_2$ is close enough to $z_1$.

\begin{lemma}
\label{map}
For any $\eta_1 >0$, one can find $\delta_1 >0$ depending only on
$\eta_1$, $\Omega$, $N$ and $K$ such that if $z_1,
z_2 \in K$, $\| z_1-z_2 \| \le \delta_1$ and $S\subset B(0,\delta_1)$,
then there exists a holomorphic map $\Phi$ defined on $\mathbb B^n$
such that $\Phi|_S = id|_S$, $\Phi(z_1)=z_2$ and $\Phi(\mathbb B^n) \subset B(0,1+\eta_1)$.
\end{lemma}
\begin{proof}
Let
$$
P(z):= \prod_{a \in S} (z-a) \cdot \frac{\bar z_1}{\|z_1\|},
\Phi(z) := z + \frac{P(z)}{P(z_1)} (z_2-z_1).
$$
Take any $\delta_1 \le \frac12 \min_K \|z\|$. Then $|P(z_1)| \ge 2^{-N} \|z_1\|^N
\ge 2^{-N} \min_K \|z\|$.  On the other hand, $|P(z)| \le 2^N$ for $z\in\mathbb B^n$.
So the conclusion will hold whenever $\| z_1-z_2 \| \le 2^{-2N} \|z_1\|^N \eta_1 =:\delta_1$.
\end{proof}

We need to construct a function plurisubharmonic and negative on $B(0,1+\eta_1)$
that is a competitor for $G_S$.  First recall that
$$
G_S (z) \ge \sum_{a\in S} G_{a}(z) = \sum_{a\in S} \log \| \phi_a(z)\|,
$$
where $\phi_a$ is an automorphism of the unit ball exchanging $a$ and $0$.
From an explicit formula for $\phi_a$ \cite{Ru}, we know that
$$
1 - ||\phi_a(z)||^2 = \frac{ (1-||a||^2)(1-||z||^2)} {| 1-z \cdot \bar a |^2}.
$$
Since $| 1-z \cdot \bar a |^2 \ge (1-||a||)^2$,
we deduce, if $\|a\| \le \delta \le \frac13$,
$$
\| \phi_a (z) \|^2 \ge
1- \frac{1+\|a\|}{1-\|a\|}(1-\|z\|^2)
\ge 1- 2 (1-\|z\|^2)
.
$$
We may assume
$1-\|z\|^2 \le \frac14$
so
we have $\| \phi_a (z) \|^2 \ge \frac12$ and
$$
\log \|\phi_a (z) \| \ge  -(1-\| \phi_a (z) \|^2) \ge - 2(1-\|z\|^2)
\ge - 4(1-\|z\|) \ge 4\log \|z\|.
$$
Assume that $\eta_1<\frac14$. Let
\begin{eqnarray*}
v(z) &=&  G_S (z),   \mbox{ for }
\|z\|
\le e^{-2\eta_1},\\
&=& \max\left( G_S (z), \eta_1 + \log \|z\| + 4 N  \log \|z\| \right),
 \mbox{ for } e^{-2\eta_1} \le  \|z\|\le 1, \\
&=& \eta_1 + \log \|z\| +  4 N \log \|z\|, \mbox{ for } 1 \le  \|z\| < 1+\eta_1.
\end{eqnarray*}
Since $G_S (z) > \eta_1 + \log \|z\| + 4 N  \log \|z\| $
for $\|z\| = e^{-2\eta_1}$ and  $G_S (z) =0 < \eta_1$ for $\|z\|=1$, we have
 $v \in PSH (B(0,1+\eta_1))$. Let $v_1:= v  - (1+ 4 N)\log (1+\eta_1) - \eta_1
 \in PSH_-(B(0,1+\eta_1))$.

 Clearly $v_1 \circ \Phi \in PSH_-(\mathbb B^n)$.
 Since $v_1 = G_S + O(1)$ in a fixed ball containing $S$, and $\Phi$ fixes $S$,
 $v_1 \circ \Phi \le G_S$.  We apply this at the point $z_1$:
 $$
 v_1 (z_2) = v_1 \circ \Phi (z_1) \le G_S (z_1).
 $$
 Now we choose $\eta_1$ small enough so that $K \subset B(0, e^{-2\eta_1} )$,
 and so
 that
 $$
 v_1(z_2)= G_S(z_2) - (1+ 2 N)\log (1+\eta_1) - \eta_1 > G_S(z_2) - \eta
 $$
 for  $\|a\| \le \delta (\eta, \eta_1,N) <\delta_1$.
\end{proof}

\begin{prop}
\label{bootball}
Let $S_\eps = \{ a_1^\eps, \dots, a_N^\eps \}\subset\B^n$ with $\lim_{\eps\to0} a_j^\eps=0$,
$1 \le j \le N$. Suppose that $\lim_{\eps\to0} G_{S_\eps } = g$ in
$L^1_{loc}(\mathbb B^n \setminus \{0\})$.
Then the convergence takes place uniformly on compacta of
$\mathbb B^n \setminus \{0\}$; in particular, $g$ is maximal plurisubharmonic
on $\mathbb B^n \setminus \{0\}$.
\end{prop}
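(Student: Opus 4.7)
The plan is to upgrade the hypothesized $L^1_{\mathrm{loc}}$ convergence to uniform convergence on compacta by combining the equicontinuity supplied by Lemma \ref{equicont} with uniform bounds, invoking Arzel\`a--Ascoli, and then using continuity of the Monge--Amp\`ere operator on uniformly convergent bounded PSH sequences to conclude maximality of $g$ on $\B^n\setminus\{0\}$.

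First I would fix an arbitrary compact set $K\subset\B^n\setminus\{0\}$ and verify that the family $(G_{S_\eps})$ is uniformly bounded on $K$ for $\eps$ small. The upper bound $G_{S_\eps}\le 0$ is immediate. For the lower bound I would use the standard estimate $G_{S_\eps}\ge\sum_{j=1}^N G_{a_j^\eps}$ together with the explicit one-pole Green function of the ball $G_a(z)=\log\|\phi_a(z)\|$: since $K$ sits at a positive distance from the origin and each $a_j^\eps\to 0$, each term $\log\|\phi_{a_j^\eps}(z)\|$ is bounded below by a constant independent of $\eps$, uniformly in $z\in K$.

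Equicontinuity on $K$ is exactly the content of Lemma \ref{equicont}: for every $\eta>0$ it produces $\delta>0$ depending only on $\eta$, $N$, and $K$ such that $|G_{S_\eps}(z_1)-G_{S_\eps}(z_2)|<\eta$ whenever $z_1,z_2\in K$ satisfy $\|z_1-z_2\|<\delta$ and $S_\eps\subset B(0,\delta)$, the latter inclusion holding automatically for all $\eps$ sufficiently small. Arzel\`a--Ascoli then gives precompactness of $(G_{S_\eps})$ in $C(K)$. Given any sequence $\eps_k\to 0$, I extract a subsequence along which $G_{S_{\eps_{k_l}}}$ converges uniformly on $K$ to some continuous function $h$. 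Uniform convergence implies $L^1(K)$ convergence, so $h=g$ almost everywhere on $K$; after adjusting $g$ on a null set (equivalently, passing to the upper regularization), the two functions coincide on $K$. A standard subsequence argument then forces the whole family $G_{S_\eps}$ to converge uniformly to $g$ on $K$, and since $K$ was arbitrary this proves the uniform-on-compacta convergence.

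Finally, for maximality I would appeal to the Bedford--Taylor continuity of the complex Monge--Amp\`ere operator under uniformly convergent sequences of locally bounded plurisubharmonic functions. Each $G_{S_\eps}$ satisfies $(dd^c G_{S_\eps})^n\equiv 0$ on $\B^n\setminus S_\eps$, and for $\eps$ small $K\cap S_\eps=\emptyset$, so passing to the uniform limit yields $(dd^c g)^n=0$ on $K$; varying $K$ gives maximality on $\B^n\setminus\{0\}$, while plurisubharmonicity of $g$ is automatic as a uniform limit of PSH functions. I do not foresee any serious obstacle: the argument is essentially a packaging of Lemma \ref{equicont} together with classical pluripotential theory, with the only mildly delicate point being the identification of every subsequential uniform limit with the prescribed $L^1_{\mathrm{loc}}$ limit $g$.
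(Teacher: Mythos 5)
Your proposal is correct and follows essentially the same route as the paper: uniform two-sided bounds from the rough estimates, equicontinuity from Lemma \ref{equicont}, extraction of uniformly convergent subsequences, and identification of every subsequential limit with $g$ via the $L^1_{loc}$ hypothesis. The paper merely runs the Arzel\`a--Ascoli step by hand (almost-everywhere convergence at a finite net of points plus the uniform Cauchy criterion) and obtains maximality from the maximum principle rather than from Bedford--Taylor continuity, but these are cosmetic differences.
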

\begin{proof}
Since the topology of uniform convergence on compacta is metrizable, it will be
enough to show that any subsequence $\{G_{S_{\eps_j}}\}$
admits a convergent subsequence.  First consider
a fixed compact set $K \subset \mathbb B^n \setminus \{0\}$.  Then $\{G_{S_{\eps_j}}\}$
converges in $L^1(K)$, so there exists a subsequence, which we denote by $\{G_j\}$,
which converges almost everywhere on $K$.
Note that the standard rough estimates on Green functions show that
all $G_{S_\eps}$ are bounded by common bounds on $K$, and therefore
so is $g$ (where it is defined).

We want to show that the subsequence $\{G_j\}$
satisfies the uniform Cauchy criterion.  Let $\delta >0$.
Let $\eta_0:= \min \left( \min_K \|z\|, 1-\max_K \|z\| \right)$.
By Lemma \ref{equicont}
applied to $\{z : dist(z,K)\le \eta_0/2\}$,
there exists $\eta = \eta(\delta) \le \eta_0/2$ and $J_1=J_1(\delta)$ such that
for any $j\ge J_1$, the oscillation of
$G_j$ on any ball of radius $\eta$ is at most $\delta/4$.

We cover the compact set
$K$ by balls $B(c_k, \eta), 1\le k \le m(\delta)$. The almost everywhere convergence
implies that for each $k$, there exists $c'_k \in B(c_k, \eta)$ such
that $\lim_{j\to\infty} G_j (c'_k) = g(c'_k) $. Since there is only
a finite number of $c'_k$, there exists $J_2\ge J_1$ such that for
any $j\ge J_2$ and any $k\le m(\delta)$, $|G_j(c'_k)-g(c'_k)|\le \delta/4$.
The rest is routine:
 for any $j,l \ge J_2$, for any $z \in K$, we choose $k$ such
that $z\in B(c_k, \eta)$ and we have
\begin{eqnarray*}
\left| G_l(z) - G_j(z) \right|
&\le&
\left| G_l(z) - G_l(c'_k) \right| + \left|  G_l(c'_k) -  g(c'_k) \right|\\
&+& \left|  g(c'_k) - G_j(c'_k) \right|
 + \left|  G_j(c'_k) - G_j(z) \right|
\le \delta.
\end{eqnarray*}
To get the uniform convergence on any compact set, we repeat this
argument over an exhaustion sequence $\{K_m\}$ of compacta,
and perform a diagonal extraction.
\end{proof}

\begin{proof*}{\it Proof of Theorem \ref{bootstrap}.}
To prove the uniform convergence,
by \cite[Lemma 4.5]{MRST}, it is enough to show that there exists $g$
and $\delta_0 >0$ such that for any $\delta < \delta_0$,
$| G^\Omega_{S_\eps} - g | \le C$ for $\|z\|=\delta$ and $|\eps|<\eps(\delta)$.
Of course we take $\delta_0$ small enough so that $B(0,\delta_0) \Subset \Omega$.
Then it is easy to see that on $B(0,\delta_0)$,
$| G^\Omega_{S_\eps} - G^{B(0,\delta_0)}_{S_\eps}| \le C(\delta_0, \Omega)$.
By Proposition~\ref{bootball}, $\{G^{B(0,\delta_0)}_{S_\eps}\}$
converges uniformly on compacta of $B(0,\delta_0) \setminus \{0\}$,
in particular on the sphere of radius $\delta$, and we have the desired property.

To prove the fact about Monge-Amp\`ere measure, first notice that
uniform convergence on compacta (or even pointwise convergence) clearly
implies, using the maximum principle, that $g$ is maximal plurisubharmonic
on $\Omega \setminus\{a\}$, and so $(dd^c)^ng = C \delta_a$, with $C\le N$
by \cite[Proposition 1.13]{MRST}.  To finish the proof, we only need to show
that $(dd^c)^ng (\Omega)=N$.

For any non-positive function $u$ on $\Omega$ and $m\in \N^*$, let $T_m (u):=
\max(-m,u)$.
Then, the rough estimates imply that $\{g\le -m\} \Subset \Omega$ and
$\{G^\Omega_{S_\eps}\le -m\} \Subset \Omega$, so
$$(dd^c)^n T_m(g) (\Omega)=(dd^c)^ng (\Omega),\quad (dd^c)^n T_m(G^\Omega_{S_\eps}) (\Omega)=(dd^c)^n G^\Omega_{S_\eps} (\Omega)=N.$$

Again by the rough estimates and the fact that all the $a_j(\eps)$
tend to $a$, for any fixed $m$ there exists $r_m>0, \eps_m>0$ such that
$B(a,r_m) \subset \{g\le -m-1\}$ and $B(a,r_m) \subset \{G^\Omega_{S_\eps}\le -m-1\}$,
for $|\eps|\le \eps_m$. For any compactum $K\subset \Omega$, $K\setminus B(a,r_m)$
is a compactum of $\Omega \setminus \{a\}$, so $G^\Omega_{S_\eps}$ converges uniformly
to $g$ on it, therefore $T_m(G^\Omega_{S_\eps})$ converges uniformly to $T_m(g)$
on $K$. This implies that
$$(dd^c)^n T_m(g) (\Omega) = \lim_{\eps\to 0} (dd^c)^n T_m(G^\Omega_{S_\eps}) (\Omega)=N.$$

Note that convergence of the Monge-Amp\`ere measures can also be proved by
noticing that uniform convergence on compacta of $\Omega \setminus \{a\}$
implies convergence in capacity, and that type of convergence guarantees convergence
of the corresponding Monge-Amp\`ere measures, see \cite{Ce} or \cite{PHH}.
\end{proof*}

\section{Proof of the main result}
\label{proofmain}

Let $0\in\Om$ and let $\I_\eps$ be a family of finite length ideals in ${\mathcal O}(\Omega)$ such that $V(\I_\eps)\to \{0\}$ as $\eps\to 0$. We assume that all the powers $\I_\eps^p$ converge (in the sense of Definition~\ref{convid}) to some limits $\I_{(p)}$, $p=1,2,\ldots$. Surely, $V(\I_{(p)})=\{0\}$.

\medskip

The crucial point is the following simple observation.

\begin{prop}\label{lem:graded} For any $p,q\in\N$,
\begin{equation}\label{eq:graded}\I_{(p)}\cdot\I_{(q)}\subset\I_{(p+q)}.\end{equation}
\end{prop}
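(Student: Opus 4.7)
The plan is to exploit the obvious fact that, for each fixed $\eps$, one has the equality $\I_\eps^p\cdot\I_\eps^q=\I_\eps^{p+q}$, and then pass to the limit using the $\liminf$ half of the convergence in Definition \ref{convid}. Recall that by hypothesis $\I_\eps^p\to\I_{(p)}$ in the sense of that definition, so in particular $\I_{(p)}=\liminf_{\eps\to0}\I_\eps^p$, and similarly for $q$ and $p+q$.

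First I would fix $f\in\I_{(p)}$ and $g\in\I_{(q)}$, and invoke Definition \ref{convid}(i) to produce sequences $f_\eps\in\I_\eps^p$ and $g_\eps\in\I_\eps^q$ converging to $f$ and $g$ respectively, locally uniformly on $\Om$. Then $f_\eps g_\eps\in\I_\eps^p\cdot\I_\eps^q=\I_\eps^{p+q}$. Since $f_\eps\to f$ and $g_\eps\to g$ locally uniformly and the limits are holomorphic (hence locally bounded on compacta), the product $f_\eps g_\eps$ converges to $fg$ locally uniformly on $\Om$. Applying Definition \ref{convid}(i) in the reverse direction to the family $\I_\eps^{p+q}$, we conclude that $fg\in\liminf\I_\eps^{p+q}=\I_{(p+q)}$.

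Thus every product of a generator of $\I_{(p)}$ by a generator of $\I_{(q)}$ lies in $\I_{(p+q)}$. Since $\I_{(p+q)}$ is an ideal of $\O(\Om)$, it contains the ideal generated by all such products, which is by definition the product ideal $\I_{(p)}\cdot\I_{(q)}$. This gives the desired inclusion \eqref{eq:graded}.

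There is essentially no obstacle here: the argument is a one-line consequence of the fact that locally uniform limits of holomorphic functions multiply correctly, combined with the very definition of the lower limit of a family of ideals. The only point that deserves a brief mention is that we are implicitly using the easy direction $\I_{(p+q)}\supset\liminf\I_\eps^{p+q}$ of the assumed convergence, so that membership in $\liminf\I_\eps^{p+q}$ suffices to conclude membership in $\I_{(p+q)}$; note that in general \eqref{eq:graded} need not be an equality, which is precisely what makes the limit ideals an interesting graded-like object rather than simply the powers of $\I_{(1)}$.
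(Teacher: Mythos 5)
Your argument is correct and is essentially identical to the paper's own proof: take $f\in\I_{(p)}$, $g\in\I_{(q)}$ as locally uniform limits of $f_\eps\in\I_\eps^p$, $g_\eps\in\I_\eps^q$, observe $f_\eps g_\eps\in\I_\eps^{p+q}$ converges to $fg$, and conclude $fg\in\liminf_{\eps\to0}\I_\eps^{p+q}=\I_{(p+q)}$. Your closing remark that this only handles products of elements and that one must then pass to the ideal they generate is a small but legitimate point of extra care that the paper leaves implicit.
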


\begin{proof} If $f\in \I_{(p)}$ and $g\in\I_{(q)}$, then they are limits of certain functions $f_\eps\in\I_\eps^p$ and $g_\eps\in\I_\eps^q$, respectively. Note that $f_\eps\,g_\eps\in\I_\eps^{p+q}$. Therefore, $$fg=\lim_{\eps\to 0} f_\eps\,g_\eps\in\liminf_{\eps\to0}\I_\eps^{p+q}=\I_{(p+q)}.$$
\end{proof}

Note that the inclusion in (\ref{eq:graded}) can be strict.

\begin{exam}\label{exam:3point}  $3$-point model in $\C^2$. \end{exam}
\noindent Let $a_1(\eps)=(\eps,0)$, $a_2(\eps)=(0,\eps)$, $a_3(\eps)=(0,0)$.
The ideals $$\I_\eps=\langle z_1z_2, z_1(z_1-\eps),z_2(z_2-\eps)\rangle$$ converge to the
ideal $\I=\langle z_1^2,z_1z_2,z_2^2\rangle$, while the squares $\I_\eps^2$ converge to the ideal $\I_{(2)}$ generated by $\I^2$ and the function $z_1z_2(z_1+z_2)$. Indeed, the ideal $$\I_\eps^2=\langle z_1^2z_2^2, z_1^2(z_1-\eps)^2, z_2^2(z_2-\eps)^2, z_1^2z_2(z_1-\eps), z_1z_2^2(z_2-\eps), z_1z_2(z_1-\eps)(z_2-\eps)\rangle $$
contains the function $z_1z_2(z_1+z_2)-\eps z_1z_2=\eps^{-1}[z_1^2z_2^2 - z_1z_2(z_1-\eps)(z_2-\eps)]$.\hfill$\square$

\medskip

Relation (\ref{eq:graded}) means precisely that $\{\I_{(p)}\}$ is a {\it graded family of ideals}. In particular, this implies that we have control over the Hilbert-Samuel multiplicities of these limit ideals:

\begin{prop}\label{prop:volume} There exists the limit
$$
e(\I_\bullet):=\lim_{p\to\infty}p^{-n}e(\I_{(p)})= \inf_{p\in\N}p^{-n}e(\I_{(p)})=\limsup_{p\to\infty}n!p^{-n}\ell(\I_{(p)}).
$$
\end{prop}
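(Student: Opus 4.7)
My plan is to control both the length $\ell(\I_{(p)})$ and the multiplicity $e(\I_{(p)})$ of every limit ideal in terms of the Hilbert--Samuel data of a single chosen $\I_{(p_0)}$. Setting $a_p:=p^{-n}e(\I_{(p)})$ and $b_p:=n!\,p^{-n}\ell(\I_{(p)})$, the target is the chain
$$\inf_p a_p\ \le\ \liminf_p a_p\ \le\ \limsup_p a_p\ \le\ \limsup_p b_p\ \le\ \inf_p a_p,$$
which forces all four expressions to coincide and yields the proposition at once.

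For the crucial step $\limsup_q b_q\le\inf_p a_p$, I would fix $p_0\in\N$ and, for an arbitrary $q$, write $q=p_0k+r$ with $0\le r<p_0$. Because $\I_{(r)}$ is $\m$-primary it contains some power $\m^{N_r}$, and since $\I_{(p_0)}\subset\m$ this gives $\I_{(p_0)}^{N_r}\subset\m^{N_r}\subset\I_{(r)}$. Iterating the graded inclusion (\ref{eq:graded}) then yields
$$\I_{(p_0)}^{k+N_r}\ \subset\ \I_{(p_0)}^k\cdot\I_{(r)}\ \subset\ \I_{(q)},$$
so $\ell(\I_{(q)})\le\ell(\I_{(p_0)}^{k+N_r})$. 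The univariate Hilbert--Samuel expansion $\ell(\I_{(p_0)}^m)=\frac{e(\I_{(p_0)})}{n!}m^n+O(m^{n-1})$, combined with $q\sim p_0k$ as $k\to\infty$, then gives $b_q\le a_{p_0}+o(1)$; the claim follows by taking $\limsup$ in $q$ and then $\inf$ in $p_0$.

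The reverse inequality $\limsup_p a_p\le\limsup_p b_p$ is immediate from Lech's classical bound $e(\J)\le n!\,\ell(\J)$ for $\m$-primary $\J$, i.e.\ $a_p\le b_p$. The main conceptual obstacle is precisely this ingredient: the paper has already quoted the easy direction $e\ge\ell$, but only the \emph{reverse} comparison up to a factor $n!$ lets one squeeze $b_p$ from above by $a_p$ and close the chain. The minor technical point is the absorption trick $\I_{(p_0)}^{N_r}\subset\m^{N_r}\subset\I_{(r)}$: converting the mixed product $\I_{(p_0)}^k\cdot\I_{(r)}$ into a pure power of $\I_{(p_0)}$ keeps the whole argument within ordinary univariate Hilbert--Samuel asymptotics and avoids any manipulation of minimal numbers of generators or mixed multiplicities.
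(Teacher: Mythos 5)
Your argument is correct, but it takes a genuinely different route from the paper, which disposes of this proposition in a single line by citing Theorem~1.7 of Musta\c{t}\u{a}'s paper on multiplicities of graded sequences of ideals; you in effect re-derive the special case of that theorem that is needed, and your two nontrivial inputs are the right ones. The Euclidean-division absorption step --- writing $q=p_0k+r$, using $\I_{(p_0)}^{N_r}\subset\m_0^{N_r}\subset\I_{(r)}$ together with iterates of (\ref{eq:graded}) to get $\I_{(p_0)}^{k+N_r}\subset\I_{(q)}$ and hence $\ell(\I_{(q)})\le\ell(\I_{(p_0)}^{k+N_r})$ --- correctly reduces everything to the one-ideal Hilbert--Samuel expansion of $\I_{(p_0)}$ and yields $\limsup_q b_q\le\inf_p a_p$; this is the standard Fekete-type mechanism in this setting. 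The closing inequality $a_p\le b_p$ is indeed Lech's inequality $e(\J)\le n!\,e(\m_0)\,\ell(\J)$ specialized to the regular local ring $\O_{\C^n,0}$, where $e(\m_0)=1$; you are right that the elementary bound $e\ge\ell$ quoted in Section~\ref{basic} goes the wrong way, and this reverse comparison is where the real content of the statement sits --- it is not derivable from the other tools already present in the paper. Two cosmetic points you should make explicit: treat $r=0$ separately (there is no $\I_{(0)}$; for $q=p_0k$ the inclusion $\I_{(p_0)}^{k}\subset\I_{(q)}$ already suffices), and take $N=\max_{1\le r<p_0}N_r$ so that the error term in $b_q\le a_{p_0}+o(1)$ is uniform in $r$. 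What the citation buys the authors is brevity; what your proof buys is a transparent account of exactly which commutative-algebra facts are consumed (the graded inclusion, the Hilbert--Samuel polynomial of a single ideal, and Lech's inequality).
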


\begin{proof} This follows directly from Theorem~1.7 of \cite{Mustata} valid for any graded family of zero dimensional ideals. \end{proof}

The value $e(\I_\bullet)$ is called the {\it volume} of the graded family $\I_\bullet$.

\medskip

Proposition \ref{lem:graded} implies
\begin{equation}\label{eq:gradedgreen}
G_{\I_{(p)}\cdot\I_{(q)}}\le G_{\I_{(p+q)}},
\end{equation}
and we are going to deduce from this a convergence result for $G_{\I_{(p)}}$ -- more precisely, for the \emph{scaled Green functions}
\begin{equation}\label{eq:Ghat} \widehat G_{\I_{(p)}}=p^{-1}G_{\I_{(p)}}.\end{equation}

\begin{prop}\label{prop: conv} There exists the limit
\begin{equation}\label{eq:lim}V(z)=\lim_{p\to\infty} \widehat G_{\I_{(p)}}(z)=\sup_{p\in\N} \widehat G_{\I_{(p)}}(z)\end{equation} whose upper regularization $G_{\I_\bullet}(z)=\limsup_{x\to z}V(x)$ is a plurisubharmonic function satisfying
\begin{equation}\label{eq:mass} (dd^c  G_{\I_\bullet})^n=e(\I_\bullet)\delta_0.\end{equation}
Furthermore, $\widehat G_{\I_{(p)}}\to G_{\I_\bullet}$ in $L^p(\Omega)$ for all $p\in [1,n]$.
\end{prop}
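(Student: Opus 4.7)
My plan is, in order: (i) establish the pointwise identity via Fekete's lemma, (ii) deduce plurisubharmonicity of the regularization, (iii) compute the Monge--Amp\`ere mass along a monotone subsequence, and (iv) upgrade to $L^q$ convergence (I use $q$ for the $L^p$-exponent to avoid clashing with the index $p$). The key preliminary is the product-ideal inequality $G_{\I \cdot \J} \ge G_\I + G_\J$: this holds because $G_\I + G_\J$ is a nonpositive plurisubharmonic function satisfying, by (\ref{eq:FI}), the bound $G_\I + G_\J \le \max_{i,j} \log|\psi_{a,i}\varphi_{a,j}| + O(1)$, which makes it admissible for $\F_{\I \cdot \J}$. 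Combined with (\ref{eq:gradedgreen}), this yields the pointwise superadditivity
$$G_{\I_{(p+q)}}(z) \ge G_{\I_{(p)}}(z) + G_{\I_{(q)}}(z), \qquad p,q \in \N.$$
Applying Fekete's lemma pointwise to $a_p := G_{\I_{(p)}}(z)$ then produces (\ref{eq:lim}): $V(z) = \lim_p \widehat G_{\I_{(p)}}(z) = \sup_p \widehat G_{\I_{(p)}}(z)$. Since each $\widehat G_{\I_{(p)}}$ is nonpositive and plurisubharmonic and $V \ge \widehat G_{\I_{(1)}} \not\equiv -\infty$, the upper regularization $G_{\I_\bullet} = V^*$ is plurisubharmonic by the standard theorem on suprema of plurisubharmonic families.

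For the Monge--Amp\`ere identity, iterating the superadditivity gives $\widehat G_{\I_{(mp)}} \ge \widehat G_{\I_{(p)}}$ for all $m,p \in \N$, so along the subsequence $p_k = k!$ the family $\widehat G_{\I_{(p_k)}}$ is pointwise increasing with limit $V$, whose upper regularization is $G_{\I_\bullet}$. By Bedford--Taylor continuity of the Monge--Amp\`ere operator on increasing sequences, $(dd^c \widehat G_{\I_{(p_k)}})^n \to (dd^c G_{\I_\bullet})^n$ weakly. Using (\ref{eq:grpower}) and (\ref{eq:grfmass}), the left-hand side equals $p_k^{-n} e(\I_{(p_k)})\, \delta_0$, which by Proposition~\ref{prop:volume} tends to $e(\I_\bullet)\, \delta_0$, giving (\ref{eq:mass}).

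For the $L^q$ convergence, convergence along the subsequence $p_k$ is immediate from monotone convergence: $G_{\I_\bullet}$ has only a logarithmic singularity at $0$ and vanishes on $\partial \Omega$, so it lies in $L^q(\Omega)$ for every $q < \infty$ and dominates the increasing nonpositive sequence $\widehat G_{\I_{(p_k)}}$. For the full sequence I would apply Vitali's theorem: pointwise convergence is already in hand, and uniform integrability of $\{|\widehat G_{\I_{(p)}}|^q\}_p$ on $\Omega$ for $q \in [1,n]$ should follow from the uniform bound $p^{-n} e(\I_{(p)}) \to e(\I_\bullet)$ on Monge--Amp\`ere masses combined with Chern--Levine--Nirenberg-type inequalities that control $L^q$-norms of nonpositive plurisubharmonic functions by (powers of) their total Monge--Amp\`ere mass.

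The hard part is precisely this last step: converting the uniform Monge--Amp\`ere bound into uniform integrability for the non-monotone full sequence, where the exponent restriction $q \le n$ mirrors the natural range arising in Cegrell-type pluripotential $L^q$ estimates. The earlier steps are essentially formal once the product inequality $G_{\I \cdot \J} \ge G_\I + G_\J$ is in hand, but translating pluripotential-theoretic boundedness into classical $L^q$ compactness is where the real analytic work lies.
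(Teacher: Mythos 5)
Your steps (i)--(iii) reproduce the paper's argument essentially verbatim: the superadditivity $G_{\I_{(p)}}+G_{\I_{(q)}}\le G_{\I_{(p+q)}}$ deduced from the product-ideal bound together with (\ref{eq:gradedgreen}), Fekete's lemma for (\ref{eq:lim}), and the monotone subsequence $p_k=k!$ (cofinal for divisibility, so its pointwise limit really is $V$) combined with the convergence theorem for the Monge--Amp\`ere operator and Proposition~\ref{prop:volume} to get (\ref{eq:mass}). No objection there.

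The gap is in step (iv), and you have located it yourself. As stated, your plan does not close: a uniform bound on the total Monge--Amp\`ere masses $p^{-n}e(\I_{(p)})$ cannot by itself give uniform integrability of $|\widehat G_{\I_{(p)}}|^q$ (the estimate must use the boundary normalization, since adding a negative constant changes the $L^q$ norm without changing the mass), and the ``CLN-type inequality'' you would need is not a standard off-the-shelf statement but precisely the energy/comparison estimate the paper imports as Lemma~\ref{lem:ACCP} (from \cite{NP}). That lemma makes Vitali unnecessary: applying it with $u=\widehat G_{\I_{(p)}}$, $v=G_{\I_\bullet}$ (both maximal off $0$, zero on $\partial\Om$, $u\le v$) and $w=\|z\|^2/R^2$ where $\Om\subset B(0,R)$, so that $(dd^cw)^n$ is a positive multiple of Lebesgue measure, one gets the norm estimate
$$
c_nR^{-2n}\int_\Om\bigl(G_{\I_\bullet}-\widehat G_{\I_{(p)}}\bigr)^n\,dV\;\le\; n!\left[p^{-n}e(\I_{(p)})-e(\I_\bullet)\right]\;\longrightarrow\;0
$$
by Proposition~\ref{prop:volume}, and H\"older gives the range $q\in[1,n]$. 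You should also notice that an elementary repair is already contained in what you proved: iterating your superadditivity with $q=1$ gives $G_{\I_{(p)}}\ge p\,G_{\I_{(1)}}$, hence $0\ge\widehat G_{\I_{(p)}}\ge G_{\I_{(1)}}\in L^q(\Om)$ for every finite $q$; since $\widehat G_{\I_{(p)}}\to V$ everywhere and $V=G_{\I_\bullet}$ off the pluripolar (hence Lebesgue-null) set $\{V<V^*\}$, dominated convergence yields the $L^q$ convergence for all $q<\infty$ --- no uniform-integrability argument and no exponent restriction needed. Either route completes your proof; only the quantitative one matches the paper's.
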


\begin{proof}
By (\ref{asympgreen}), since the product of ideals is generated by pairwise products of their generators, we have
$$G_{\I_{(p)}\cdot\I_{(q)}}=G_{\I_{(p)}}+G_{\I_{(q)}}+O(1),$$
so the function $G_{\I_{(p)}}+G_{\I_{(q)}}$ belongs to the class $\F_{\I_{(p)}\cdot\I_{(q)}}$ defined in (\ref{eq:FI}), and inequality (\ref{eq:gradedgreen}) gives us
\begin{equation}\label{eq:conv} G_{\I_{(p)}}+G_{\I_{(q)}}\le G_{\I_{(p+q)}}.\end{equation}

Relations (\ref{eq:lim}) follow now from (\ref{eq:conv}) by standard arguments; see, for example, \cite[Lemma~1.4]{Mustata} applied to $\alpha_p=-G_{\I_{(p)}}(z)\ge0$.

Now we turn to proving (\ref{eq:mass}). By the Choquet lemma, one can find a sequence $\widehat G_{\I_{(p_j)}}$ increasing almost everywhere to the function $G_{\I_\bullet}$; actually, one can just choose $\widehat G_{\I_{(p!)}}$, cf.~\cite{R10}. Indeed, relation (\ref{eq:graded}) implies, in particular,
$ \I_{(p)}^k\subset\I_{(kp)}$,
 and using (\ref{eq:grpower}) we get
$$k\,G_{\I_{(p)}}=G_{\I_{(p)}^k}\le G_{\I_{(kp)}}$$
for any $k,p\in\N$.
Therefore,
$ \widehat G_{\I_{(p)}}\le \widehat G_{\I_{(kp)}}$, so
$\widehat G_{\I_{(p!)}}\ge \widehat G_{\I_{(q)}}$ for all $q\le p$.

By the monotone convergence theorem for the complex Monge-Amp\`ere operator,
$$(dd^c\widehat G_{\I_{(p!)}})^n\to (dd^c G_{\I_\bullet})^n.$$
Since $(dd^c\widehat G_{\I_{(p!)}})^n=(p!)^{-n}e(\I_{(p!)})$, relation (\ref{eq:mass}) follows from the first equality in Proposition~\ref{prop:volume}.

Finally, since  $\widehat G_{\I_{(p)}}\le G_{\I_\bullet}$ for any $p$, the last assertion follows from Lemma~\ref{lem:ACCP} below and H\"{o}lder's inequality.
\end{proof}

\medskip

\begin{lemma}\label{lem:ACCP} Let $u,v\in \PSH^-(\Om)$ be maximal on $\Om\setminus\{x\}$, equal to $0$ on $\partial \Om$, and $u\le v$ in $\Om$. Then
$$\int_\Om(v-u)^n (dd^cw)^n\le n!\,\int_\Om w\left[(dd^cu)^n-(dd^cv)^n\right]$$
for any $w\in\PSH(\Om)$, $0\le w\le 1$.
\end{lemma}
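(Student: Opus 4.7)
The lemma is a Cegrell--Persson type integration-by-parts inequality, and my plan is to prove it in two stages: first reduce to the case of bounded $u,v$ by truncation, then handle the bounded case by a telescoping identity followed by iterated Stokes' formula.

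For the reduction, set $u_j := \max(u,-j)$ and $v_j := \max(v,-j)$. Both are bounded plurisubharmonic functions in the Cegrell class $\mathcal{E}_0(\Om)$: they agree with $u,v$ near $\partial\Om$ (where $u,v$ are already bounded, since they vanish there), and the inequality $u_j\le v_j$ is preserved by $\max$. As $j\to\infty$, $u_j\searrow u$ and $v_j\searrow v$. By the Bedford--Taylor convergence theorem for decreasing sequences, $(dd^c u_j)^n\to(dd^c u)^n$ and $(dd^c v_j)^n\to(dd^c v)^n$ weakly on $\Om$; since $w$ is bounded and upper semicontinuous, the right-hand side of the desired inequality passes to the limit. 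On the left, $(v_j-u_j)^n$ tends pointwise to $(v-u)^n$ and is dominated/monotone in an appropriate sense, so Fatou applied to the positive measure $(dd^c w)^n$ transports the inequality. Thus it suffices to prove the bounded case.

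In the bounded case, expand the difference of Monge--Amp\`ere measures by the telescoping identity
$$(dd^c u)^n-(dd^c v)^n = dd^c(u-v)\wedge T_{n-1},\qquad T_{n-1}:=\sum_{k=0}^{n-1}(dd^c u)^k\wedge (dd^c v)^{n-1-k}.$$
Then the right-hand side of the lemma becomes $n!\int_\Om w\,dd^c(u-v)\wedge T_{n-1}$. Since $u-v$ is bounded and vanishes on $\partial\Om$, Stokes' formula transfers the $dd^c$ onto $w$ without a boundary contribution,
$$\int_\Om w\,dd^c(u-v)\wedge T_{n-1}=\int_\Om (u-v)\,dd^c w\wedge T_{n-1},$$
and it remains to establish
$$\int_\Om(v-u)^n(dd^c w)^n \;\le\; n!\int_\Om (v-u)\,dd^c w\wedge T_{n-1}.$$
This last bound I would obtain by iterating the Stokes identity $\int\phi^n (dd^c w)^n = n\int \phi^{n-1}\, d\phi\wedge d^c w\wedge (dd^c w)^{n-1}$ for $\phi:=v-u\ge 0$ (justified since $\phi$ is bounded and vanishes on $\partial\Om$), combined at each step with Cauchy--Schwarz for the positive currents $dd^c u,dd^c v,dd^c w$ in order to convert one factor of $dd^c w$ into a factor appearing in $T_{n-1}$.

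The main obstacle is precisely this last polarization step, since $v-u$ is a difference of plurisubharmonic functions and not itself psh, so neither $(dd^c(v-u))^n$ nor the intermediate mixed products are \emph{a priori} defined in the Bedford--Taylor sense. The standard remedy is to approximate $u$ and $v$ by decreasing sequences of smooth psh functions on $\Om$ (which exist because $\Om$ is hyperconvex), carry out the polarization in the smooth setting where every differential and product is classical, and then pass to the limit using continuity of the mixed Monge--Amp\`ere operator along decreasing sequences of bounded psh functions. The hypothesis that $u,v$ are maximal on $\Om\setminus\{x\}$ is not needed in the derivation of the chain of inequalities itself (which is an $\mathcal{E}_0$-statement), but it dictates the concrete form of the right-hand side when the lemma is later applied, since $(dd^c u)^n$ and $(dd^c v)^n$ are then Dirac masses at $x$.
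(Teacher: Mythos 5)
The paper's own ``proof'' is a one-line citation to Nguyen--Ph\d{a}m \cite[Prop.~3.4]{NP}, so your attempt to reconstruct the argument from scratch is a genuinely different route. Unfortunately the reconstruction has two concrete problems.

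First, a sign is dropped. After the telescoping identity and Stokes you correctly arrive at
$\int_\Om w\,dd^c(u-v)\wedge T_{n-1}=\int_\Om (u-v)\,dd^c w\wedge T_{n-1}$, but the very next line declares that ``it remains to establish''
$\int_\Om(v-u)^n(dd^c w)^n \le n!\int_\Om (v-u)\,dd^c w\wedge T_{n-1}$,
which has replaced $(u-v)$ by $(v-u)$. Since $u-v\le 0$ and $dd^c w\wedge T_{n-1}\ge 0$, the integral $\int(u-v)\,dd^c w\wedge T_{n-1}$ is nonpositive while $\int(v-u)^n(dd^c w)^n$ is nonnegative; as written, your reduction would therefore demand that both vanish, which is false in general. (The target you eventually write down is, as it happens, the \emph{right} intermediate inequality --- it is what one gets from the version of \cite[Prop.~3.4]{NP} that takes $w$ negative psh with $-1\le w\le 0$ and $-w$ in the right-hand side --- but it does not follow from the chain you give from the statement you started with. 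You should spell out the sign bookkeeping, including the psh condition on $w$ versus $-w$, explicitly rather than silently flipping a sign.)

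Second, and more seriously, the step that would actually prove the reduced inequality is left as a gesture. You say you will obtain $\int\phi^n(dd^c w)^n\le n!\int\phi\,dd^c w\wedge T_{n-1}$ (with $\phi=v-u$) by ``iterating the Stokes identity \ldots combined at each step with Cauchy--Schwarz for the positive currents $dd^cu$, $dd^cv$, $dd^cw$ in order to convert one factor of $dd^cw$ into a factor appearing in $T_{n-1}$.'' But Cauchy--Schwarz for positive closed currents relates mixed Monge--Amp\`ere integrals of the \emph{same} potentials; it does not let you swap a $dd^cw$ for a $dd^cu$ or $dd^cv$. The mechanism that actually produces such a swap in arguments of this kind is a careful integration by parts that moves $dd^c$ onto $\phi^k$, expands $dd^c(\phi^k)=k\phi^{k-1}dd^c\phi + k(k-1)\phi^{k-2}d\phi\wedge d^c\phi$, discards the gradient term using the sign of $w$, and replaces $dd^c\phi=dd^cv-dd^cu$ by $dd^cu$ or $dd^cv$ using the signs again; each such step is where the hypotheses $u\le v$, the sign of $w$, and the zero boundary values are actually used. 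None of that is in your sketch, and ``Cauchy--Schwarz'' is not a substitute for it. As it stands, the core of the lemma is unproved.

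The preliminary reduction (truncate $u,v$, pass to the limit via Bedford--Taylor monotone convergence and Fatou on the positive measure $(dd^cw)^n$) is a reasonable outline, though you should also note that the quantity $(v_j-u_j)^n$ is not monotone in $j$, so some justification for the passage to the limit beyond ``dominated/monotone in an appropriate sense'' is needed. The main missing ingredient, however, is the actual integration-by-parts chain, which is the entire content of \cite[Prop.~3.4]{NP}.
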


\begin{proof} This is a particular case of \rm\cite[Prop. 3.4]{NP}.
\end{proof}

\medskip

Next, we get a lower bound for the limit of Green functions. In order to state it without assumption on uniform convergence of the Green functions $G_{\I_\eps}$, we will use here the notion of greenification of a plurisubharmonic function, see Definition~\ref{greenific}.

\begin{prop}\label{prop:lbound} Let $\varphi$ be the largest plurisubharmonic minorant of the function $\liminf_{\eps\to 0}G_{\I_\eps}$. Then its greenification $g_\varphi$ satisfies
$g_\varphi\ge G_{\I_\bullet}$.
Consequently, if $G_{\I_\eps}$ converge to $\varphi$ locally uniformly outside $0$, then $\varphi\ge G_{\I_\bullet}$.
\end{prop}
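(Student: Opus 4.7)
The plan is to establish $\widehat G_{\I_{(p)}}\le g_\varphi$ on $\Om$ for every $p\in\N$. Granting this, $\sup_p\widehat G_{\I_{(p)}}\le g_\varphi$, and upper regularization — using that $g_\varphi$ is plurisubharmonic, hence equal to its own upper regularization — yields $G_{\I_\bullet}\le g_\varphi$.

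For this per-$p$ inequality I build a psh competitor in the class defining $g_\varphi$. By the liminf part of Definition~\ref{convid}, global generators $\psi_1,\dots,\psi_m\in\I_{(p)}$ of the stalk at $0$ can be written as locally uniform limits $\psi_j=\lim_\eps\psi_j^\eps$ with $\psi_j^\eps\in\I_\eps^p$. Fix a hyperconvex $\Om'\Subset\Om$ containing $0$ and set $C_j^\eps:=\sup_{\Om'}\log|\psi_j^\eps|$, uniformly bounded in $\eps$ by uniform convergence on $\overline{\Om'}$. Then $v_j^\eps:=\log|\psi_j^\eps|-C_j^\eps\in\F_{\I_\eps^p}(\Om')$, so $v_j^\eps\le p\,G_{\I_\eps}^{\Om'}$ on $\Om'$; combined with the uniform bound $G_{\I_\eps}^{\Om'}\le G_{\I_\eps}^\Om+C_0$ on $\Om'$ (obtained by comparison, since $-G_{\I_\eps}^\Om$ is uniformly bounded on $\partial\Om'$ via the rough estimate $G_{\I_\eps}^\Om\ge\sum_j G_{a_j(\eps)}^\Om$), passing to pointwise $\liminf_\eps$ and then to the maximum over $j$ gives
\[
V:=p^{-1}\max_j\bigl(\log|\psi_j|-C_j\bigr)\le\liminf_{\eps\to 0}G_{\I_\eps}^\Om+C_0\quad\text{on }\Om'.
\]
By a standard gluing (attaching $V-c$ on $\Om'$, for $c$ sufficiently large, to the global psh minorant $N\,G_0^\Om\le\liminf_\eps G_{\I_\eps}^\Om$ supplied by the rough estimate, where $G_0^\Om$ is the single-pole Green function at $0$), one obtains a genuine psh minorant of $\liminf_\eps G_{\I_\eps}^\Om$ on all of $\Om$ majorizing $V-c$ near $0$. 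Maximality of $\varphi$ then gives $V\le\varphi+c$ on a neighborhood of $0$. Since $\max_j\log|\psi_j|=G_{\I_{(p)}}+O(1)$ near $0$ by (\ref{asympgreen}), we conclude $\widehat G_{\I_{(p)}}\le\varphi+O(1)$ near $0$, so $\widehat G_{\I_{(p)}}\in\PSH^-(\Om)$ lies in the class defining $g_\varphi$ and hence $\widehat G_{\I_{(p)}}\le g_\varphi$ on $\Om$.

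For the consequent, if $G_{\I_\eps}\to\varphi$ locally uniformly outside $0$, then $\varphi$ is psh, maximal on $\Om\setminus\{0\}$, and vanishes on $\partial\Om$; $\varphi$ and $g_\varphi$ therefore share the same residual Monge-Amp\`ere mass at $0$ and the same boundary values, so Dirichlet uniqueness forces $\varphi=g_\varphi$, yielding $\varphi\ge G_{\I_\bullet}$. The main technical obstacle I anticipate is precisely the gluing step, since the bound on $V$ is proven only on the relatively compact subdomain $\Om'$; a bounded shift of $V$ combined with the rough global minorant $N\,G_0^\Om$ promotes the local estimate to a bona fide psh minorant on $\Om$, and the greenification then absorbs the resulting bounded local discrepancy, which is precisely why the main claim is naturally stated in terms of $g_\varphi$ rather than $\varphi$.
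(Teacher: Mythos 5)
Your proposal is correct and follows the same strategy as the paper: establish $\varphi\ge\widehat G_{\I_{(p)}}+O(1)$ near the limit point for each $p$, pass to greenifications, and take the supremum over $p$ (together with the standard identification $\varphi=g_\varphi$ under locally uniform convergence). The only real difference is that where the paper obtains the key inequality by directly citing \cite[Proposition~1.5]{MRST} applied to $\J_\eps=\I_\eps^p$, you re-derive that inequality from scratch via explicit competitors $\log|\psi_j^\eps|$ built from generators of $\I_{(p)}$ realized as limits of elements of $\I_\eps^p$, plus a localization-and-gluing step; this is a correct, self-contained unpacking of the cited result rather than a genuinely different route.
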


\begin{proof}
By \cite[Proposition~1.5]{MRST},
$$\varphi\ge G_{\liminf\limits_{\eps\to 0}\J_\eps}+O(1)$$ for any
family of zero-dimensional ideals $\J_e$ such that $V(\liminf\limits_{\eps\to 0}\J_\eps)=\{0\}$.
 Applying this to $\J_\eps=\I_\eps^p$ and taking into account (\ref{eq:grpower}) and (\ref{eq:Ghat}), we get the inequalities
$$ \varphi\ge \widehat G_{\I_{(p)}}+C_p$$
with some constants $C_p$. By passing to the greenifications, we deduce
$g_\varphi\ge \widehat G_{\I_{(p)}}$ and then, in view of Proposition~\ref{prop: conv}, $g_\varphi\ge G_{\I_\bullet}$. If, in addition, the convergence of  $G_{\I_\eps}$ to $\varphi$ is locally uniform outside $0$, then $\varphi=g_\varphi$, which completes the proof.
\end{proof}

\medskip
 From now on, we assume that
the ideals $\I_\eps$ are intersections of maximal ideals. In this case we can compute the volume  $e(\I_\bullet)$ of the graded family  $\I_\bullet$.

\begin{prop}
Let $\I_\eps$ be radical ideals with  $V(\I_\eps)$ consisting of $N$ different points $a_1(\eps), \ldots, a_N(\eps)$ for all $\eps\neq 0 $ sufficiently small. Then $e(\I_\bullet)=N$.
\end{prop}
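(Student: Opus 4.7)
The plan is to invoke the last equality of Proposition~\ref{prop:volume}, namely $e(\I_\bullet)=\limsup_{p\to\infty} n! p^{-n}\ell(\I_{(p)})$, and then to compute $\ell(\I_{(p)})$ explicitly from the hypothesis. The key point is that, since the ideals $\I_\eps$ are assumed to be radical ideals of $N$ distinct points, one actually has a closed-form expression for $\ell(\I_\eps^p)$ before passing to the limit, and this can then be transported to the limit by means of (\ref{eq:limitlength}).

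First I would use continuity of length under Douady convergence: the hypothesis $\I_\eps^p\to\I_{(p)}$ together with (\ref{eq:limitlength}) gives $\ell(\I_{(p)})=\lim_{\eps\to0}\ell(\I_\eps^p)$, which reduces the task to computing $\ell(\I_\eps^p)$ for each fixed $\eps\neq 0$ small enough. For such $\eps$, the $N$ points $a_1(\eps),\ldots,a_N(\eps)$ are distinct and $\I_\eps$ is their radical ideal, so $\O(\Om)/\I_\eps^p$ decomposes (by the Chinese remainder theorem, or simply because its support consists of $N$ disjoint points) as the direct sum of the local contributions $\O_{a_j(\eps)}/\m_{a_j(\eps)}^p$. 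Since $\O_{a_j(\eps)}$ is a regular local ring of dimension $n$, the quotient by the $p$-th power of its maximal ideal has dimension equal to the number of monomials of degree strictly less than $p$ in $n$ variables, that is, $\binom{n+p-1}{n}$. Therefore
$$
\ell(\I_\eps^p)=N\binom{n+p-1}{n},
$$
and by (\ref{eq:limitlength})
$$
\ell(\I_{(p)})=N\binom{n+p-1}{n}.
$$

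Finally, $\binom{n+p-1}{n}=\frac{(p+n-1)(p+n-2)\cdots p}{n!}$ is a polynomial in $p$ of degree $n$ with leading coefficient $1/n!$, so $n! p^{-n}\ell(\I_{(p)})\to N$ as $p\to\infty$. Combined with Proposition~\ref{prop:volume}, this yields $e(\I_\bullet)=N$. I do not anticipate any real obstacle in this argument: the two inputs required—continuity of length under the convergence of ideals, and the standard Hilbert function of powers of the maximal ideal in a regular local ring—are both available, and everything else is an elementary asymptotic computation.
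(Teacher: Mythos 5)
Your proof is correct and follows essentially the same route as the paper: invoke the $\limsup$ formula from Proposition~\ref{prop:volume}, use stability of length under the ideal convergence to reduce to $\ell(\I_\eps^p)$, identify $\I_\eps^p$ with the ideal of functions whose jets of order $<p$ vanish at the $N$ distinct points (your CRT decomposition is just another phrasing of this), and compute $\ell(\I_\eps^p)=N\binom{p+n-1}{n}$ before taking the asymptotic. No substantive difference from the paper's argument.
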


\begin{proof}
Since the length is stable under limit transitions, $\ell(\I_{(p)})=\ell(\I_\eps^p)$, so Proposition~\ref{prop:volume} gives us
$$ e(\I_\bullet)=\limsup_{p\to\infty}n!\,p^{-n}\ell(\I_\eps^p).$$

Each ideal $\I_\eps^p$ consists of all functions $f\in{\mathcal O}(\Om)$ satisfying
$$\frac{\partial^{|\beta|} f}{\partial z^{\beta}}(a_j(\eps))=0,\quad |\beta|<p,\  1\le j\le N,$$
so $\ell(\I_\eps^p)=\binom{p+n-1}{n}N$ and
$$ e(\I_\bullet)=\lim_{p\to\infty}n!\,p^{-n}\binom{p+n-1}{n}N=N.$$
\end{proof}

\medskip

Now we are ready to prove our main result. It will rest on the following domination principle.

\begin{lemma}\label{lemma:dom} {\rm \cite[Lemma 6.3]{R7}} Let $u_1$ and $v_2$ be two plurisubharmonic solutions of the Dirichlet problem
$(dd^cu)^n=\delta_0$, $u|_{\partial\Om}=0$. If $u_1\ge u_2$ on $\Om$, then $u_1\equiv u_2$.
\end{lemma}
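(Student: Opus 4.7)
The plan is to invoke Lemma~\ref{lem:ACCP} with $u := u_2$ and $v := u_1$. The hypotheses are met: by assumption $u_2 \le u_1$ in $\PSH^-(\Om)$ with vanishing boundary values, and both functions are maximal on $\Om \setminus \{0\}$ since their Monge--Amp\`ere measures, being $\delta_0$, vanish off the origin. The only mildly delicate step in the plan is to check this maximality rigorously, as the $u_i$ are unbounded at $0$; but both belong to Cegrell's class $\F(\Om)$ (having finite total Monge--Amp\`ere mass and zero boundary values), where the identification ``$(dd^c u)^n = 0$ on an open set $U$'' $\Longleftrightarrow$ ``$u$ is maximal on $U$'' is standard.

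With these inputs, the right-hand side of Lemma~\ref{lem:ACCP} collapses to
$$n!\int_\Om w\,\bigl[(dd^c u_2)^n - (dd^c u_1)^n\bigr] = n!\int_\Om w\,(\delta_0-\delta_0) = 0,$$
so for every $w \in \PSH(\Om)$ with $0 \le w \le 1$ we obtain
$$\int_\Om (u_1 - u_2)^n (dd^c w)^n \le 0.$$
Since $u_1 \ge u_2$ the integrand is non-negative, which forces the integral to equal zero, and hence $(u_1-u_2)^n = 0$ almost everywhere with respect to the measure $(dd^c w)^n$.

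To upgrade this into a pointwise identity, choose $w(z) := \|z\|^2/R^2$ with $R$ large enough that $\Om \subset B(0,R)$. This $w$ is plurisubharmonic with $0 \le w \le 1$, and $(dd^c w)^n$ is a strictly positive multiple of Lebesgue measure on $\Om$. Consequently $u_1 = u_2$ Lebesgue-almost everywhere, and since two plurisubharmonic functions agreeing a.e.\ determine the same element of $L^1_{\rm loc}(\Om)$ and therefore coincide pointwise (via upper-semicontinuous regularization), we conclude $u_1 \equiv u_2$. The main obstacle is really only the preliminary verification of the maximality hypothesis; after that, the argument is a direct squeeze between $u_1\ge u_2$ and the inequality supplied by Lemma~\ref{lem:ACCP}.
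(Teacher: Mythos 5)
Your proof is correct, and it takes the route that the paper only gestures at: the paper itself disposes of this lemma by citing \cite[Lemma 6.3]{R7}, and merely \emph{remarks} that the statement ``can actually be deduced from the more advanced Lemma~\ref{lem:ACCP}.'' You carry out that deduction in full. The argument is sound: with $u:=u_2$, $v:=u_1$ the right-hand side of Lemma~\ref{lem:ACCP} vanishes because both Monge--Amp\`ere measures equal $\delta_0$, the left-hand integrand $(u_1-u_2)^n$ is nonnegative, and testing against $w(z)=\|z\|^2/R^2$ (whose Monge--Amp\`ere measure is a positive multiple of Lebesgue measure) forces $u_1=u_2$ a.e., hence everywhere by upper semicontinuous regularization of plurisubharmonic functions. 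You are also right to flag the one genuinely delicate point, namely that ``$(dd^cu)^n=0$ on $\Om\setminus\{0\}$'' must be upgraded to ``maximal on $\Om\setminus\{0\}$'' for unbounded competitors; this is indeed standard in Cegrell's class $\F(\Om)$, which is the natural setting in which the Dirichlet problem in the lemma is posed. What your approach buys is self-containedness: the domination principle then rests only on \cite[Prop.~3.4]{NP} (quoted as Lemma~\ref{lem:ACCP}) rather than on an additional external reference, at the modest cost of the maximality verification and the choice of a strictly plurisubharmonic test function $w$.
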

{\it Remark.} This can actually be deduced from the more advanced Lemma~\ref{lem:ACCP}. A more general version of the domination principle can be found in \cite{ACCP} as well.

\begin{prop}
\label{oldmain}
{\sl Let $\{\I_\eps\}_{\eps\in A}$ be a family of ideals of holomorphic functions vanishing at distinct points $a_1(\eps),\ldots, a_N(\eps)$ of a bounded hyperconvex domain $\Om\subset\Cn$, where $A$ is a set in the complex plane, $0\in\overline A\setminus A$. Assume that all $a_j\to a\in\Om$ and $\I^p_\eps\to\I_{(p)}$ for all $p\in\N$ as $\eps\to 0$ along $A$. If the limit of the Green functions $G_{\I_\eps}$ for the ideals $\I_\eps$ exists, uniformly on compact subsets of $\Om\setminus\{a\}$, then }
$$\lim_{\eps\to0}G_{\I_\eps}(z)=\limsup_{y\to z}\sup_{p\in\N}\, p^{-1} G_{\I_{(p)}}(y).$$
\end{prop}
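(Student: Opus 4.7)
My plan is to identify the limit $\varphi := \lim_{\eps\to 0}G_{\I_\eps}$ with the function $G_{\I_\bullet}$ introduced in Proposition~\ref{prop: conv}. Since each $\I_\eps$ is assumed radical with $V(\I_\eps)=S_\eps:=\{a_1(\eps),\ldots,a_N(\eps)\}$, its Green function coincides with the $N$-pole Green function $G_{S_\eps}$, and the right-hand side of the stated identity is precisely $G_{\I_\bullet}$. One inequality, $\varphi\ge G_{\I_\bullet}$, is immediate from Proposition~\ref{prop:lbound}: uniform convergence outside $a$ makes $\varphi$ equal to its own greenification, so the proposition yields $\varphi \ge G_{\I_\bullet}$ directly. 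The natural strategy for the reverse inequality is the domination principle (Lemma~\ref{lemma:dom}): show that $\varphi$ and $G_{\I_\bullet}$ are both negative plurisubharmonic solutions of the same Dirichlet problem $(dd^c u)^n = N\,\delta_a$, $u\vert_{\partial\Om}=0$, and conclude equality.

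The Monge--Amp\`ere masses are already in hand: Theorem~\ref{bootstrap} applied to $S_\eps$ yields $(dd^c\varphi)^n = N\,\delta_a$ and the maximality of $\varphi$ on $\Om\setminus\{a\}$, while Proposition~\ref{prop: conv} combined with the immediately preceding identity $e(\I_\bullet)=N$ yields $(dd^c G_{\I_\bullet})^n = N\,\delta_a$. For the boundary behavior, both functions are $\le 0$; the vanishing of $\varphi$ on $\partial\Om$ is obtained from an $\eps$-uniform rough lower bound $G_{\I_\eps}\ge\sum_j G^\Om_{a_j(\eps)}$, whose right-hand side converges to $0$ at $\partial\Om$ uniformly in $\eps$ because the poles stay in a fixed compact subset of $\Om$ and $\Om$ is hyperconvex; the vanishing of $G_{\I_\bullet}$ on $\partial\Om$ follows from the sandwich $0\ge G_{\I_\bullet}\ge\widehat G_{\I_{(1)}}=G_{\I_{(1)}}$, the standard Green function of the limit ideal $\I_{(1)}$.

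With these ingredients, I rescale by $N^{-1/n}$ so that both $N^{-1/n}\varphi$ and $N^{-1/n} G_{\I_\bullet}$ solve $(dd^c u)^n=\delta_a$ with zero boundary values, preserving the comparison $\varphi\ge G_{\I_\bullet}$; Lemma~\ref{lemma:dom} then forces equality, giving $\varphi = G_{\I_\bullet}$, which is exactly the stated formula. The delicate point I anticipate is checking cleanly that $\varphi$ vanishes on $\partial\Om$, since the hypothesis only gives uniform convergence of $G_{\I_\eps}$ on compacta of $\Om\setminus\{a\}$ and says nothing about behavior near the boundary; thus the boundary limit must be extracted from $\eps$-uniform a priori estimates rather than by direct passage to the limit. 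Once this boundary behavior is secured, the combination of Theorem~\ref{bootstrap}, Proposition~\ref{prop:lbound}, and the domination principle dispatches the remaining work essentially without further calculation.
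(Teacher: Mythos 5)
Your proof is correct and follows essentially the same path as the paper's: Proposition~\ref{prop:lbound} for the lower bound $\varphi\ge G_{\I_\bullet}$, Theorem~\ref{bootstrap} together with $e(\I_\bullet)=N$ and \eqref{eq:mass} for the Monge--Amp\`ere mass identification, and the domination principle of Lemma~\ref{lemma:dom} to conclude equality. You supply the rescaling by $N^{-1/n}$ and the boundary-vanishing checks that the paper's terse proof leaves implicit, but the structure and key ingredients are identical.
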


\begin{proof} Denote the limit of the Green functions $G_{\I_\eps}$ by $\varphi$.
The uniform convergence implies $(dd^c \varphi)^n=\lim (dd^c G_{\I_\eps})^n=N\delta_0$. By
 Proposition~\ref{prop:lbound}, we have $g\ge G_{\I_\bullet}$, and (\ref{eq:mass}) implies $g= G_{\I_\bullet}$ by Lemma~\ref{lemma:dom}.
\end{proof}

%
%
%

\begin{proof*}{\it Proof of Theorem \ref{main}.}
First notice that the set $\{G_{S_\eps}, \eps \in A\}$ is sequentially weakly compact
in the dual of the space of bounded continuous functions on $\Omega$, since
$0\ge G_{S_\eps} \ge \sum_{j=1}^N G_{a_j(\eps)}$, and each of those functions has
a uniformly bounded $L^1$ norm when $a_j(\eps)$ is close to $0$. From this standard arguments of measure theory can be used to show that a subsequence converging in $L^1_{\rm loc}$ can always be extracted.  Or we can simply use \cite[Theorem 3.2.12, p. 149]{Hor},
noticing that the case of a subsequence converging to $-\infty$ is excluded by the estimate
above.

Now suppose that $\I^p_\eps\to\I_{(p)}$ for all $p\in\N$, and suppose to get a contradiction that $G_{S_\eps}$ is not converging uniformly on compacta to
$G_{\I_\bullet}$.  Then
we can get $\{\eps_j\} \subset A$, $\eps_j \to 0$, such that
$\sup_K |G_{\eps_j}-G_{\I_\bullet}| \ge \delta>0$ for $j$ large enough and for some
compactum $K\subset \Omega \setminus \{0\}$.  Then there is a subsequence
of $\{\eps_j\}$, which we denote again by $\{\eps_j\}$, such that $G_{\eps_j}$
converges in $L^1_{\rm loc}$, and by Theorem \ref{bootstrap} uniformly on compacta. By Proposition \ref{oldmain}, it must converge to $G_{\I_\bullet}$: a contradiction.
\end{proof*}

Finally, we mention the following finiteness result.

\begin{theorem}\label{theo:fin} If, in addition to the conditions of Theorem \ref{main} (or Proposition \ref{oldmain}), the Hilbert-Samuel multiplicity of some limit ideal $\I_{(p)}$ equals $p^nN$, then the limit of the Green functions equals
$\widehat G_{\I_{(p)}}$.
\end{theorem}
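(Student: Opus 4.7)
\begin{proof*}{\it Proof proposal for Theorem \ref{theo:fin}.}
The plan is to show that, under the multiplicity hypothesis $e(\I_{(p)})=p^nN$, the scaled Green function $\widehat G_{\I_{(p)}}$ already coincides with $G_{\I_\bullet}$ everywhere in $\Omega$. The conclusion then follows immediately from Theorem \ref{main}, because its right-hand side is by construction the upper regularization of $\sup_{q\in\N}\widehat G_{\I_{(q)}}$, which is $G_{\I_\bullet}$ by Proposition \ref{prop: conv}.

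First I would assemble the two functions as competitors in a uniqueness result for the Monge-Amp\`ere equation. From Proposition \ref{prop: conv} we already have
\[ \widehat G_{\I_{(p)}}\le G_{\I_\bullet}\quad\text{in } \Omega, \]
both functions are negative plurisubharmonic, vanish on $\partial\Omega$, and are maximal on $\Omega\setminus\{0\}$ (for $\widehat G_{\I_{(p)}}$ this is built into the definition of the Green function of an ideal; for $G_{\I_\bullet}$ it follows from its being the upper regularization of an increasing almost-everywhere limit of such functions).

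Next I would compute the residual Monge-Amp\`ere masses at the origin. From (\ref{eq:grfmass}) applied to $\I_{(p)}$ we get $(dd^cG_{\I_{(p)}})^n=e(\I_{(p)})\delta_0$, so the hypothesis yields
\[ (dd^c\widehat G_{\I_{(p)}})^n=p^{-n}e(\I_{(p)})\delta_0=N\delta_0. \]
On the other hand, (\ref{eq:mass}) together with the proposition computing $e(\I_\bullet)=N$ gives $(dd^cG_{\I_\bullet})^n=N\delta_0$. Geometrically, the hypothesis is saying exactly that the infimum $\inf_q q^{-n}e(\I_{(q)})=N$ from Proposition \ref{prop:volume} is attained at $q=p$.

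Finally I would invoke the domination principle Lemma \ref{lemma:dom}. Since $(dd^c(cu))^n=c^n(dd^cu)^n$, the rescalings $u_1:=N^{-1/n}G_{\I_\bullet}$ and $u_2:=N^{-1/n}\widehat G_{\I_{(p)}}$ are two plurisubharmonic solutions of $(dd^cu)^n=\delta_0$ in $\Omega$ with zero boundary values, and the inequality $u_1\ge u_2$ is preserved. Lemma \ref{lemma:dom} then forces $u_1\equiv u_2$, hence $G_{\I_\bullet}=\widehat G_{\I_{(p)}}$, as needed. (Alternatively one can apply Lemma \ref{lem:ACCP} directly to $u_1,u_2$ without rescaling.) There is no serious obstacle here: once the Monge-Amp\`ere masses have been matched via the hypothesis, everything reduces to a single application of the domination principle combined with Theorem \ref{main}.
\end{proof*}
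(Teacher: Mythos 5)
Your proof is correct and follows essentially the same route as the paper: compare the residual Monge--Amp\`ere masses of $\widehat G_{\I_{(p)}}$ and $G_{\I_\bullet}$ (both equal to $N\delta_0$ under the hypothesis), use the inequality $\widehat G_{\I_{(p)}}\le G_{\I_\bullet}$, and conclude by the domination principle of Lemma \ref{lemma:dom}. Your explicit rescaling by $N^{-1/n}$ to match the normalization $(dd^cu)^n=\delta_0$ in the statement of that lemma is a small point of care that the paper's one-line proof leaves implicit, but otherwise the arguments coincide.
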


\begin{proof} The condition on multiplicity of $\I_{(p)}$ means that $(dd^c \widehat G_{\I_{(p)}})^n(0)=N$. Since $\widehat G_{\I_{(p)}}\le G_{\I_\bullet}$ and the latter one has the same Monge-Amp\`ere mass at $0$, Lemma~\ref{lemma:dom} gives us $\widehat G_{\I_{(p)}}= G_{\I_\bullet}$.
\end{proof}

In particular, this works when the limit ideal $G_{\I_{(1)}}$ is a complete intersection, because in this case $e(\I_{(1)})=N$. More advanced situations will be considered in the next section.

\section{Examples and questions}
There are several natural examples where our theorem works. In what follows, $\m_a\subset{\mathcal O}(\Om)$ denotes the maximal ideal composed by the functions vanishing at the point $a\in\Om$.

\begin{exam} Two points in $\Cn$.\end{exam}
Let $a_1$, $a_2$ be continuous mappings of the unit disk $\D$ into the unit polydisk $\D^n$ such that $a_1(\eps)\neq a_2(\eps)$ for all $\eps\in\D\setminus\{0\}$, and $a_i(0)=0$. Set $\I_\eps=\m_{a_1(\eps)}\cap\m_{a_2(\eps)}$.

The family  $\{\I_\eps\}_{\eps\neq 0}$ need not have a limit as $\eps\to0$.
By compactness, there is a sequence $\eps_k\to0$ such that
$[a_1(\eps_k)- a_2(\eps_k)]\to \nu \in \mathbb P^{n-1} \C$,
where $[z]$ denotes the class of $z$ in $\mathbb P^{n-1} \C$, for $z\in \D^n\setminus \{0\}$. As is easy to see, the sequence $\I_{\eps_k}$ has a limit, $\I_{(1)}$, whose multiplicity equals $2$ because it is a complete intersection.

According to Section~6.1 of \cite{MRST},
the corresponding Green functions $G_{\I_{\eps_k}}$ converge to a function whose Monge-Amp\`ere mass at $0$ equals $2$. Therefore, by Theorems~\ref{main} and \ref{theo:fin}, the limit function coincides with $G_{\I_{(1)}}$.\hfill$\square$

\medskip

\begin{exam} Complete intersection case of the $4$-point ideals in $\C^2$. \end{exam}
\noindent Consider the ideals $\I_\eps=\m_0\cap\m_{(\eps,0)}\cap\m_{(0,\eps)}\cap\m_{(\eps,\eps)}$ in a bounded hyperconvex domain $\Omega\subset \C^2$ containing the origin.
They converge to $\I_{(1)}=\langle z_1^2,z_2^2\rangle$ whose Hilbert--Samuel multiplicity equals $4$, so the limit of the Green functions is the Green function of $\I_{(1)}$. The existence of the limit is however quite a simple fact in this case, see \cite{MRST}. (Note that a much stronger result was proved there. Namely, when the limit ideal $\I_{(1)}$ is a complete intersection, then the limit of the Green functions exists and coincides with the Green function of $\I_{(1)}$.)\hfill$\square$
\medskip

\begin{exam}\label{exam:3}  $3$-point problem in $\C^2$.\end{exam}
\noindent This more complicated problem is also treated in \cite{MRST}, where the following two cases were considered: the generic one modeled by the ideals
$\I_\eps=\m_0\cap\m_{(\eps,0)}\cap\m_{(0,\eps)}$,
and the degenerate one modeled by
$\I_\eps=\m_0\cap\m_{(\rho(\eps),0)}\cap\m_{(0,\eps)}$
with $\rho(\eps)/\eps\to0$. Both families converge to $\m_0^2$, however the limits of the corresponding Green functions were shown to be different.

In the first case, as was mentioned in Example~\ref{exam:3point}, the squares $\I_\eps^2$ converge to the ideal $\I_{(2)}$ generated by $\m_0^4$ and the function $z_1z_2(z_1+z_2)$. Note that
$$\widehat G_{\I_{(2)}}= \frac12\, G_{\I_{(2)}}=
\max\{2\log|z_1|,2\log|z_2|,\frac12\log|z_1z_2(z_1+z_2)|\}+O(1).$$
Comparing it with results from \cite{MRST}, we see that this is precisely the asymptotic of the limit function $\lim_{\eps\to0} G_{\I_\eps}$ and, therefore, in this case one has $G_{\I_\bullet}=\widehat G_{\I_{(2)}}$.

Another way to check this, according to Theorem~\ref{theo:fin}, is to show that the Monge-Amp\`ere mass of $\widehat G_{\I_{(2)}}$   equals $3$. This can be done easily, since the mass of $G_{\I_{(2)}}$ can be computed as the Hilbert--Samuel multiplicity of the ideal $\I_{(2)}$, and the latter equals the multiplicity of generic mappings $(f_1,f_2)$ for $f_1,f_2\in {\I_{(2)}}$, which is $12$.

\medskip

In the second case, the limit ideal $\I_{(2)}$ is monomial and contains, besides $\m_0^4$, the function
$z_1^2 z_2$. The Hilbert--Samuel multiplicity of $\I_{(2)}$ easily computes to be $12$, so the Monge-Amp\`ere mass of $\widehat G_{\I_{(2)}}$  equals $3$ again. And the limit of the Green functions is indeed, up to a bounded term, $\frac12 \log \max(|z_1^4|, |z_1^2 z_2|, |z_2^4|)$.

Note that in \cite{MRST} the convergence of the Green functions were established by using a sophisticated machinery of constructing special analytic disks, while now we get the results almost for free.

\medskip
The case not treated in \cite{MRST} was that when all the poles tend to $0$ along
the same asymptotic directions, or to be more precise, when one point is equal to $(0,0)$
(which is no loss of generality) and the other two verify
$\lim_\eps a_2^\eps/\|a_2^\eps\| =a_3^\eps/\|a_3^\eps\|=v$.  This question
is dealt with in \cite{DQHT}, but there is no answer there for
the limit of the Green functions when the limit ideal is not a complete intersection.

Recall that the situation may be reduced to
$a_1^\varepsilon = (0,0)$, $a_2^\varepsilon = (\varepsilon,0)$,
$a_3^\varepsilon = (\rho, \delta\rho)$,
with $\rho=\rho(\varepsilon)$, $\delta=\delta(\varepsilon)$,
 $|\eps|^2 \ge |\eps-\rho|^2 +  |\rho|^2  |\delta|^2 \ge  |\rho|^2 +  |\rho|^2  |\delta|^2$,
 so in particular $|\eps-\rho| \ge \frac12 |\eps|$.

 Denote $\alpha=\frac{\delta}{\rho-\eps}$. It is shown in \cite[Theorem 1.5]{DQHT}
 that $\lim_{\eps\to0}\I_\eps$ is a complete intersection ideal when
 $\lim_{\eps\to0}\alpha^{-1}\neq0$, and that $\lim_{\eps\to0}\I_\eps=\m_0^2$ when
 $\lim_{\eps\to0}\alpha^{-1}=0$. We want to settle the latter case.

From  \cite{DQHT}, there are polynomials  in $\mathcal{I}_\varepsilon$
\begin{equation*}
\begin{aligned}
Q_1^\varepsilon(z) &= z_1^2 - \eps z_1 - \alpha^{-1}{\delta}z_2
\to z_1^2;\\
Q_2^\varepsilon(z) &= z_2 \big(z_1 - \rho\big) \to z_1z_2;\\
Q_3^\varepsilon(z) &= z_2 \big(z_2 - \delta \rho\big) \to z_2^2,
\end{aligned}
\end{equation*}
so $\I_\eps^2$ contains
$$
Q_1^\varepsilon(z)Q_3^\varepsilon(z)-(Q_2^\varepsilon(z))^2
=
- \alpha^{-1}
\left(
z_2^3 + \eps\alpha z_1^2 z_2^2
+  (1-\eps) \delta\rho\alpha z_1^2 z_2
-2 \rho \alpha z_1 z_2^2
+ \eps \rho \alpha  z_2^2
\right),
$$
therefore $z_2^3\in \I_{(2)}$. Clearly, $\m_0^4 \subset \I_{(2)}$.
So $\widehat G_{\I_{(2)}} \ge \max ( 2 \log |z_1|, \frac32 \log |z_2|) +O(1)$. Using the fact
 that the Monge-Amp\`ere mass of this lower bound is $3$, or that the
 multiplicity of the mapping $f(z)=(z_1^4,z_2^3)$ is $12$, we see that in fact, by Theorem~\ref{theo:fin},
 $\lim_\eps G_{S_\eps} = \widehat G_{\I_{(2)}} = \max ( 2 \log |z_1|, \frac32 \log |z_2|) +O(1)$
 (if $\Omega=\D^2$, we don't even need the $O(1)$ term).
 \hfill$\square$

\medskip
\begin{exam}\label{exam:n+1}  Generic $n+1$ points in $\C^n$.\end{exam}
\noindent Consider ideals $\I_\eps$ with $S_\eps=V(\I_\eps)$ consisting of the origin and the points $\eps e_k$ for basis vectors $e_k$, $1\le k\le n$. They are generated by the functions $z_iz_j$ with $i< j$ and $z_i(z_i-\eps)$ and they converge, as expected, to $\m_0^2$.

\begin{prop}
\label{simplex}
The Green functions $G_{S_\eps}$ converge, uniformly on compacta of $\Omega\setminus\{0\}$,
to $w_n (z)+O(1)$, where
$$
w_n (z) :=
\max \left\{ \frac1{\# A}
\log \left| (\sum_{j\in A} z_j)\prod_{j\in A} z_j  \right|, A \subset \{1,\dots,n\}
\right\}  .
$$
\end{prop}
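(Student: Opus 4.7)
The plan is to apply Theorem~\ref{main} and compute the limit $G_{\I_\bullet}$ explicitly as $w_n+O(1)$. First I would pass to a subsequence along which $\I_\eps^p$ converges for every $p\in\N$ (by diagonal extraction in the Douady space); once the subsequential limit of $G_{S_\eps}$ is identified uniquely as $w_n+O(1)$, Theorem~\ref{bootstrap} yields convergence of the entire family. By Theorem~\ref{main} the subsequential limit equals $G_{\I_\bullet}$ uniformly on compacta of $\Om\setminus\{0\}$; combining Proposition~\ref{prop: conv} with the identity $e(\I_\bullet)=N=n+1$ established earlier, $G_{\I_\bullet}$ is maximal plurisubharmonic on $\Om\setminus\{0\}$, vanishes on $\partial\Om$, and satisfies $(dd^cG_{\I_\bullet})^n=(n+1)\delta_0$. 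Showing $G_{\I_\bullet}=w_n+O(1)$ then splits into two matching inequalities.

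For the lower bound $w_n\le G_{\I_\bullet}+O(1)$, I would show that for each non-empty $A\subset\{1,\dots,n\}$ of cardinality $k$, the polynomial $F_A:=(\sum_{j\in A}z_j)\prod_{j\in A}z_j$ belongs to $\I_{(k)}$. Setting $m_A:=\prod_{j\in A}z_j$ and writing $e_l(z_A)$ for the $l$-th elementary symmetric polynomial in $(z_j)_{j\in A}$, the identity
\[
G^A_\eps:=\prod_{j\in A}z_j(z_j-\eps)=\sum_{l=0}^{k}(-\eps)^{k-l}\,m_A\,e_l(z_A)\in\I_\eps^k,
\]
combined with $m_A^2=\prod_{i=1}^{k}(z_{j_i}z_{j_{i+1}})\in\I_\eps^k$ for $k\ge 2$ (cyclic indices), produces $\eps^{-1}(m_A^2-G^A_\eps)\in\I_\eps^k$ converging to $m_A\,e_{k-1}$. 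Every monomial in $m_A\,e_l$ for $l\ge 2$ has maximum exponent $2$ and total degree $k+l\ge k+2$, hence vanishes to order $\ge k$ at each point of $S_\eps$, and a direct factoring argument (illustrated by $\eps z_1^2z_2^2z_3=z_1^2z_2^3z_3-(z_1z_2)(z_1z_3)z_2(z_2-\eps)$ with both right-hand side terms in $\I_\eps^3$) shows $m_A\,e_l\in\I_\eps^k$ for each $\eps\ne 0$. Iteratively subtracting the leading terms $m_A\,e_{k-1},\dots,m_A\,e_2$ and dividing by $\eps$ each time yields an element of $\I_\eps^k$ tending to $m_A\,e_1=F_A$. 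Thus $\tfrac1k\log|F_A|\le\widehat G_{\I_{(k)}}+O(1)\le G_{\I_\bullet}+O(1)$, and $w_n\le G_{\I_\bullet}+O(1)$ follows by taking the maximum over $A$.

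For the upper bound $G_{\I_\bullet}\le w_n+O(1)$, since $G_{\I_\bullet}$ is maximal on $\Om\setminus\{0\}$, vanishes on $\partial\Om$, and has Monge-Amp\`ere mass $n+1$ at the origin, the domination principle (Lemma~\ref{lemma:dom}) applied to normalized solutions of the Dirichlet problem forces $G_{\I_\bullet}=w_n+O(1)$ once the corresponding properties of $w_n$ are verified, the key point being $(dd^cw_n)^n(\{0\})=n+1$. The cleanest route is through Theorem~\ref{theo:fin}: one finds $p\in\N$ with $e(\I_{(p)})=p^n(n+1)$, which identifies $\lim G_{S_\eps}=\widehat G_{\I_{(p)}}$, and then reads off the generators of $\I_{(p)}$ built in the previous step to conclude $\widehat G_{\I_{(p)}}=w_n+O(1)$. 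For $n=2$ this is $p=2$ (Example~\ref{exam:3}); for general $n$, the main obstacle is the multiplicity computation $e(\I_{(p)})=p^n(n+1)$ for a suitable $p$, which I expect can be carried out via a mapping-degree/Bezout-type argument exploiting the toric symmetry of the point configuration $S_\eps=\{0,\eps e_1,\dots,\eps e_n\}$ and the graded family it generates.
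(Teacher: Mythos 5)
Your lower-bound argument (showing $w_n\le G_{\I_\bullet}+O(1)$) takes a genuinely different route from the paper's. The paper proceeds by induction on dimension: it establishes only the top-dimensional inclusion $(\sum_{j=1}^n z_j)\prod_{j=1}^n z_j\in\I_{(n)}$, and does so by lifting $S_\eps$ to $\C^{n+1}$ via the map $\varphi_\eps$ and identifying the generators of $\I^k$ for the ideal of the coordinate axes (Lemma~\ref{admissible}); the remaining terms of $w_n$ are then handled by restricting to coordinate subspaces and invoking the induction hypothesis. You instead prove $F_A:=(\sum_{j\in A}z_j)\prod_{j\in A}z_j\in\I_{(\#A)}$ directly for every $A$, by an iterated $\eps$-scaling of $m_A^2-\prod_{j\in A}z_j(z_j-\eps)$. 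The iteration itself is correct, but the justification you give for the key ingredient $m_A e_l\in\I_\eps^k$ for $l\ge 2$ — a single worked identity labelled a ``direct factoring argument'' — is an example, not a proof. The clean way to get it, already half-present in your own text, is to note that since the $a_j(\eps)$ are distinct one has $\I_\eps^k=\bigcap_j\m_{a_j(\eps)}^k$, so membership in $\I_\eps^k$ is equivalent to vanishing to order $k$ at each pole, and your degree/exponent count then finishes the job in one line. With that fix, your approach is self-contained and arguably tidier than the paper's, since it treats all $A$ uniformly and avoids the restriction-to-subspaces step.

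The real gap is in the upper bound. You correctly reduce the matter to verifying $(dd^c w_n)^n(\{0\})\le n+1$ (equivalently, to finding $p$ with $e(\I_{(p)})=p^n(n+1)$ so that Theorem~\ref{theo:fin} applies, $p=2$ for $n=2$), but you do not actually prove this: you write only that you ``expect it can be carried out via a mapping-degree/Bezout-type argument.'' That expectation is right, but it is precisely the non-trivial content of the paper's Lemma~\ref{massest}. There one constructs the explicit map $f=(f_1,\dots,f_n)$ with homogeneous components $f_k=\sum_{\#A=k}\bigl(\sum_{j\in A}z_j\bigr)^{t_k}\prod_{j\in A}z_j^{t_k}$, where $t_k=t/k$ and $t$ is a common multiple of $1,\dots,n$; checks by a cascading argument that $f^{-1}(0)=\{0\}$; and applies Bezout to the degrees $(k+1)t_k$ to obtain multiplicity $\prod_k(k+1)t_k=(n+1)t^n$, hence $(dd^c w_n)^n(\{0\})\le n+1$. (This is what ultimately produces the value $p=t(n)=\operatorname{lcm}(1,\dots,n)$ mentioned at the end of the paper.) Without this or an equivalent computation, the upper bound — and hence the proposition — is not established; your proposal is incomplete at exactly the step you yourself flag as ``the main obstacle.''
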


Notice that for $n=2$, this yields the previous result in the generic case of Example~\ref{exam:3}.

\begin{proof}
We will proceed by induction on the dimension. For $n=1$, $w_1(z)=2\log |z|$ and
the result is immediate by direct calculation (we can add up the Green functions for
each pole in this case).

In general, notice that if $L$ is an affine subspace of $\C^n$, and $z=(z',z'')$
where $L=\{(z',0): z'\in \C^k\}$ in suitable coordinates,
$G^\Omega_S (z) \ge G^{\Omega\cap L}_{S\cap L} (z')$, simply because
 the latter function is a competitor in the supremum that defines the former.

We assume the result is true in $\C^d$, for any $d\le n-1$.
 Therefore, to prove that $\liminf_\eps G_{S_\eps} \ge w_n$, it will be enough
 to prove
  \begin{equation}
  \label{topdim}
 \liminf_\eps G_{S_\eps} \ge \log \left| (\sum_{j=1}^n z_j)\prod_{j=1}^n z_j  \right| ,
  \end{equation}
 and to use the induction hypothesis on all the subspaces $\mbox{Span} \{e_j, j\in A\}$
 for $\# A \le n-1$.

 We will look at $S_\eps$ as a simplex in a space with one more dimension,
formally let
$$
\varphi_\eps : (z_1, \dots, z_n) \mapsto (\eps- \sum_j z_j,  z_1, \dots, z_n)\in
\C^{n+1},
$$
where the coordinates in $\C^{n+1}$ are $(z_0, z_1, \dots, z_n)$.
Then $S_\eps = \varphi_\eps^{-1} ( \cup_{j=0}^n \C e_j )$.

The ideal associated to $\cup_{j=0}^n \C e_j$ is
$\mathcal I := < z_i z_j, 0\le i<j\le n >$.

We are using multi-index notation, $\alpha = (\alpha_0, \dots ,\alpha_n) \in \N^{n+1}$.
\begin{lemma}
\label{admissible}
$$
\mathcal I^k= \left< z^\gamma : |\gamma|=2k , \gamma_j \le k, 0\le j \le n
\right>.
$$
\end{lemma}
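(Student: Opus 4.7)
The plan is to prove Lemma~\ref{admissible} by a direct double inclusion. Since $\mathcal{I}$ is a monomial ideal, $\mathcal{I}^k$ is the monomial ideal generated by all products of $k$ generators of $\mathcal{I}$, i.e., by all $\prod_{s=1}^{k}z_{i_s}z_{j_s}$ with $i_s\ne j_s$ in $\{0,\dots,n\}$. So it suffices to characterize the monomials appearing as such $k$-fold products.

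The inclusion ``$\subset$'' is immediate: a product $\prod_{s=1}^{k}z_{i_s}z_{j_s}$ with $i_s\ne j_s$ equals $z^\gamma$ with $|\gamma|=2k$, and since each of the $k$ factors contributes at most one to any single coordinate $\gamma_j$, we obtain $\gamma_j\le k$ for all $j$.

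For the reverse inclusion, given $\gamma\in\N^{n+1}$ with $|\gamma|=2k$ and $\max_j\gamma_j\le k$, I would produce a factorization $z^\gamma=\prod_{s=1}^{k}z_{i_s}z_{j_s}$ with $i_s\ne j_s$. This is equivalent to the purely combinatorial claim that a multiset of size $2k$ over $\{0,\dots,n\}$, in which every element appears at most $k$ times, can be partitioned into $k$ unordered pairs of distinct elements. I would prove this claim by induction on $k$; the base case $k=1$ is trivial since the constraints force exactly two distinct indices with exponent $1$. For $k\ge 2$, reorder so that $\gamma_0\ge\gamma_1\ge\cdots\ge\gamma_n$, form the pair $(0,1)$, and apply the induction hypothesis to the reduced tuple $(\gamma_0-1,\gamma_1-1,\gamma_2,\dots,\gamma_n)$, whose sum drops to $2(k-1)$.

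The main point, and the only real obstacle, is verifying that the reduced tuple still satisfies the multiplicity bound: one needs the new maximum multiplicity to be at most $k-1$. Clearly $\gamma_0-1\le k-1$, and $\gamma_1-1\le\gamma_0-1\le k-1$. For $j\ge 2$ the multiplicity is unchanged, so I need $\gamma_2\le k-1$; if instead $\gamma_2\ge k$, the ordering gives $\gamma_0\ge\gamma_1\ge\gamma_2\ge k$, so $\gamma_0+\gamma_1+\gamma_2\ge 3k>2k=|\gamma|$, a contradiction. This settles the induction, hence the combinatorial claim, hence the lemma.
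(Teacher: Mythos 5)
Your proof is correct and follows essentially the same route as the paper's: both establish the easy inclusion by noting each generator $z_iz_j$ ($i\neq j$) contributes at most one to each exponent, and prove the converse by induction on $k$, splitting off the pair of variables with the two largest exponents and checking that the remaining multi-index still satisfies the bound $\gamma_l\le k-1$ (via the observation that three exponents $\ge k$ would force $|\gamma|>2k$). The only cosmetic difference is that you phrase the inductive step as a combinatorial pairing claim, whereas the paper writes $z^\gamma=z_iz_jz^{\gamma'}$ directly.
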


\begin{proof}
The equality for $k=1$ is the very definition of $\mathcal I$.
By taking products of generators,
it is immediate that
$\mathcal I^k\subset \left< z^\gamma : |\gamma|=2k , \gamma_j \le k, 0\le j \le n
\right>.$ Conversely, suppose the reverse inclusion is verified up to $k-1$.
 For any $\gamma$ involved in the
right hand side, the definition  implies that there are at least two distinct
indices $i,j$ such that $\gamma_i, \gamma_j \ge 1$, and no more than
two indices such that $\gamma_i, \gamma_j \ge k$. Pick $i,j$
so that $\gamma_i, \gamma_j$ are maximal among the exponents.
So $z^\gamma = z_iz_j z^{\gamma'}$,
where $|\gamma'|=2k-2$, and $\gamma'_l \le k-1$ for any $l$:
the induction hypothesis implies that $z^\gamma\in \I \cdot \I^{k-1}$.
\end{proof}

Now we want to prove that $(\sum_{j=1}^n z_j)\prod_{j=1}^n z_j\in \I_{(n)}$,
which will prove \eqref{topdim} by Theorem \ref{main} and the fact that
$\widehat G_{\I_{(n)}} \le G_{\I_\bullet}$.  By Lemma \ref{admissible},
$z_0 \cdots z_n (z_0 + \cdots +z_n)^{n-1} \in \I^n$, so using the pull-back,
$(\eps - (z_1 + \cdots +z_n)) z_1 \cdots z_n \eps^{n-1} \in \I_\eps^n$, so dividing by
$ \eps^{n-1}$ and taking the limit, we have the desired fact.
So we have proved the ``$\ge$" part of our Proposition.  To prove
the reverse inequality, we must show that $w_n$ has small enough
Monge-Amp\`ere mass.

\begin{lemma}
\label{massest}
$
(dd^c)^n w_n (0) \le n+1.
$
\end{lemma}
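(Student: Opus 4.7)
The plan is to identify $(dd^c w_n)^n(\{0\})$ with a Hilbert-Samuel multiplicity and bound the latter by a B\'ezout count. Let $M := \operatorname{lcm}(1, \ldots, n)$ and, for each nonempty $A \subset \{1, \ldots, n\}$, set $F_A := f_A^{M/\#A}$. These are polynomials, and since $F_{\{j\}} = z_j^{2M}$ the ideal $\J := \langle F_A\rangle_A$ is $\m_0$-primary. The identity $Mw_n = \log\max_A|F_A| = \tfrac12\log\sum_A|F_A|^2 + O(1)$, together with the residual-mass formula of \cite[Lemma~2.1]{D8} (used in Section~\ref{basic}), yields
$$M^n\,(dd^c w_n)^n(\{0\}) = e(\J),$$
so the lemma reduces to $e(\J) \le (n+1)M^n$.

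To bound $e(\J)$, I would exhibit a system of parameters in $\J$ with colength $(n+1)M^n$. For each $k = 1, \ldots, n$ set
$$g_k(z) := \sum_{\#A = k} F_A(z) \in \J,$$
a symmetric polynomial homogeneous in $z$ of degree $d_k := (k+1)M/k$, whose product telescopes: $\prod_{k=1}^n d_k = M^n \prod_{k=1}^n \tfrac{k+1}{k} = (n+1)M^n$. Each $g_k$ being homogeneous in $z$, its projective zero set in $\mathbb P^n$ is a cone with apex the affine origin, so the only possible common zero of $g_1, \ldots, g_n$ outside the origin would be a nonzero affine one. If we rule those out, B\'ezout gives $\mu_0(g_1, \ldots, g_n) = \prod d_k = (n+1)M^n$; then $(g_1, \ldots, g_n) \subset \J$ is a complete intersection, so $e(\J) \le e((g_1, \ldots, g_n)) = \mu_0(g_1, \ldots, g_n) = (n+1)M^n$.

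The only substantial step is excluding nonzero affine common zeros. By the symmetry of each $g_k$ in $z_1, \ldots, z_n$, assume $z_1, \ldots, z_m \ne 0$ and $z_{m+1} = \cdots = z_n = 0$ for some $m \ge 1$; then $F_A(z) = 0$ unless $A \subset \{1, \ldots, m\}$. If $\sum_{j=1}^m z_j \ne 0$ then $g_m(z) = f_{\{1, \ldots, m\}}(z)^{M/m} \ne 0$. If $\sum_{j=1}^m z_j = 0$ (which forces $m \ge 2$), then $\sum_{j \ne i} z_j = -z_i$ gives
$$F_{\{1, \ldots, m\} \setminus \{i\}}(z) = \bigl((-z_i) \cdot \tfrac{z_1 \cdots z_m}{z_i}\bigr)^{M/(m-1)} = (-1)^{M/(m-1)}(z_1 \cdots z_m)^{M/(m-1)},$$
independent of $i$; hence $g_{m-1}(z) = m \cdot (-1)^{M/(m-1)}(z_1 \cdots z_m)^{M/(m-1)} \ne 0$; the case $m = 1$ is immediate from $g_1(z) = z_1^{2M}$. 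The main obstacle is exactly this case-by-case verification, and the clean B\'ezout bound $(n+1)M^n$ relies on two fortunate facts — the telescoping of the degrees $d_k$, and the $i$-independence of $F_{\{1, \ldots, m\} \setminus \{i\}}(z)$ once $\sum_{j=1}^m z_j = 0$.
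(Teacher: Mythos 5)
Your proof is correct and follows essentially the same route as the paper's: the same auxiliary homogeneous polynomials $g_k=\sum_{\#A=k}F_A$ (the paper's $f_k$ with $t=M=\mathrm{lcm}(1,\dots,n)$), the same B\'ezout degree count $\prod_k (k+1)M/k=(n+1)M^n$, and the same reduction of the residual Monge--Amp\`ere mass of $w_n$ to a Hilbert--Samuel multiplicity. Your case analysis excluding nonzero common zeros (splitting on whether $\sum_{j\le m}z_j$ vanishes and using the $i$-independence of $F_{\{1,\dots,m\}\setminus\{i\}}$) is in fact a cleaner, more complete version of the paper's terse inductive sketch.
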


\begin{proof}
Let $t_k=t/k$, where $t$ is a common multiple of $1,\ldots,n$. Consider the mapping $f$ with components
$$ f_k=\sum_{|A|=k} (\sum_{j\in A} z_j)^{t_k}\prod_{j\in A} z_j^{t_k}, \quad A \subset \{1,\dots,n\},
\ k=1,\ldots,n.$$
First we check that it has an isolated zero at $0$. If $f_n=0$, then either $z_k=0$ for some $k$, or $\sum_{j} z_j=0$. In both cases, the equation $f_{n-1}=0$ gives us then either $z_{l}=0$ for some $l\neq k$, or $\sum_{j\neq k} z_j=0$. Continuing this, we arrive at the last step to the unique solution $z=0$.

Now, since the components of $f$ are homogeneous polynomials of degrees $(k+1)t_k$, Bezout's theorem gives us the multiplicity of $f$ equal to
$$\prod_k (k+1)t_k= \frac{(n+1)!\,t^n}{n!}= (n+1)t^n,$$
which shows that the Hilbert--Samuel multiplicity of the ideal generated by the functions $(\sum_{j\in A} z_j)^{t_k}\prod_{j\in A} z_j^{t_k}$, $A \subset \{1,\dots,n\}$, is at most $(n+1)t^n$, so the Monge-Amp\`ere mass of $w_n$ at $0$ is at most $n+1$.
\end{proof}

Now let $u$ be any  limit point in $L^1_{\rm{loc}}$ of $\{G_{S_\eps}\}$.  By Theorem \ref{bootstrap}, the convergence is
in fact uniform on compacta of $\Omega\setminus\{0\}$, and
$(dd^cu)^n= (n+1)\delta_0$.  We also have $u\ge w_n + O(1)$, so passing to the greenifications,
we have $u=g_u\ge g_{w_n}$ and  $(dd^c)^ng_{w_n}= c\delta_0$,
$c\le n+1$, so that, by Lemma~\ref{lemma:dom}, the two functions must be in fact equal. Now by the usual reasoning
(there is only one possible limit point), the limit of $G_{S_\eps}$ must be $ g_{w_n}
= w_n +O(1)$.
\end{proof}

%
%

\medskip

\begin{exam}\label{exam:sections} Hyperplane sections of holomorphic curves. \end{exam}
\noindent More generally, let us have a holomorphic curve (one-dimensional analytic variety) $\Gamma$ such that $0\in\C^{n+1}$ is its singular point. By Thie's theorem, there exists a neighborhood $U$ of $0$ such that for a choice of coordinates $(z,w)\in\Cn\times\C$, the restriction of $\Gamma$ to $U$ lies in the cone $\{|z|\le C|w|\}$, $C>0$. Let for $\eps\in\C\setminus\{ 0\}$, sufficiently small, $\I_\eps$ be the ideal in $O(\D^n)$ determined by the points $a_k=a_k(\eps)$ such that $(a_k,\eps)\in\Gamma$. By Propositions~III.4.7 and III.4.8 of \cite{Ha}, the collection $\{\I_\eps^p\}_{\eps}$ with any $p\in\N$  has a unique continuation to a flat family, so there exists a limit of $\I_\eps^p$ as $\eps\to0$. Therefore, the limit of the corresponding Green functions $G_{\I_\eps}$ exists and equals the function $G_{\I_\bullet}$. \hfill$\square$

\bigskip

In the end, we would like to mention a few open questions.

\medskip

1. In Example~\ref{exam:n+1}, we have found that $G_{\I_\bullet}= \widehat G_{\I_{(t(n))}}$ with $t(n)$ equal to the least common multiple of $1, \dots, n$ (in particular, $t(n)\le n!$). {\it What is (the asymptotic of) the best possible index $p(n)$ such that $G_{\I_\bullet}= \widehat G_{\I_{(p(n))}}$?}

2. {\it Is it always true that $G_{\I_\bullet}=\widehat G_{\I_{(p)}}$ for some $p\in\N$, at least in the setting of Example~~\ref{exam:sections} ?}

\medskip

3. {\it What can be said in the case of non-radical ideals $\I_\eps$ whose varieties tend to a single point?}

\end{document}